\documentclass[oneside,11pt,reqno]{amsart}
\usepackage{amssymb,amsmath,amsthm,bbm,enumerate,mdwlist,url,multirow,hyperref,amsthm,stmaryrd}
\usepackage[pdftex]{graphicx}
\usepackage[shortlabels]{enumitem}

\addtolength{\hoffset}{-1.6cm}
\textwidth 16.7cm
\addtolength{\textheight}{2.4cm}
\topmargin -0.6cm
\sloppy

\linespread{1.3}
%%%%%%%%%%%%%%%%%%%%%%%%%%%%%%%%%%%%%%%%%%%%%%%%%%%%%%%%%%%%%%%%%%%%%%%%%%%%%%%%%%%%%%%%%%%%%%%%%%%%%%%%%%%%%%
\theoremstyle{definition}
\newtheorem{definition}{Definition}%Extra square-bracket argument achieves that the numbering is the same as for definition (single uniform counter). 
\numberwithin{definition}{section}
\theoremstyle{theorem}
\newtheorem{proposition}[definition]{Proposition}

\newtheorem{theorem}[definition]{Theorem}
\newtheorem{corollary}[definition]{Corollary}
\newtheorem{assumption}[definition]{Assumption}
\numberwithin{equation}{section}
\theoremstyle{remark}
\newtheorem{remark}[definition]{Remark}
\newtheorem{question}[definition]{Question}

%%%%%%%%%%%%%%%%%%%%%%%%%%%%%%%%%%%%%%%%%%%%%%%%%%%%%%%%%%%%%%%%%%%%%%%%%%%%%%%%%%%%%%%%%%%%%%%%%%%%%%%%%%%%%%
%Auxiliary notation.
\def\PP{\mathsf P}
\def\PPP{\mathcal P}
\def\expp{\mathsf{e}}
\def\Exp{\mathrm{Exp}}
\def\EE{\mathsf E}
\def\drift{\mathsf d}
\def\dd{\mathsf D}
\def\mm{\mathsf M}
\def\jj{\mathsf J}
\def\gg{\mathsf{g}}
\def\aa{\mathsf{a}^{p,\alpha}}
\def\aaa{\mathsf{a}^{p,\alpha'}}
\def\bb{\mathsf{b}^{p,\alpha}}
\def\cc{\mathsf{c}^{p,\alpha}}
\def\bbb{\mathsf{b}^{p,\alpha'}}
\def\QQ{\mathsf Q}

\def\II{\mathcal{I}^{p,\alpha}}
\def\III{\mathcal{I}^{p,\alpha'}}
\def\JJ{\mathcal{J}^{p,\alpha}}

\def\jump{J}
\def\LL{\mathcal{L}}
\def\HH{\mathcal{H}}
\def\KK{\mathcal{K}^{p,\alpha}}
\def\MM{\mathcal{M}^{p,\alpha}_\beta}
\def\NN{\mathcal{N}^{p,\alpha}_\beta}
\def\MMM{\mathcal{M}^{p,\alpha'}_\beta}
\def\NNN{\mathcal{N}^{p,\alpha'}_\beta}
\def\DD{\mathbb D}
\def\DDD{\mathcal D}
\def\FF{\mathcal F}
\def\nn{\mathsf{n}}
\def\diffusion{\sigma^2}
\def\kk{\mathsf{k}}
\def\GG{\mathcal G}

\def\Wm{W^{(q)}}
\def\AA{\mathcal A}
\def\BB{\mathcal B}
%%%%%%%%%%%%%%%%%%%%%%%%%%%%%%%%%%%%%%%%%%%%%%%%%%%%%%%%%%%%%%%%%%%%%%%%%%%%%%%%%%%%%%%%%%%%%%%%%%%%%%%%%%%%%%

\bibliographystyle{amsplain}
%%%%%%%%%%%%%%%%%%%%%%%%%%%%%%%%%%%%%%%%%%%%%%%%%%%%%%%%%%%%%%%%%%%%%%%%%%%%%%%%%%%%%%%%%%%%%%%%%%%%%%%%%%%%%%
\begin{document}
\title[A  temporal  factorization at the maximum for certain pssMp]{A temporal factorization at the maximum for spectrally negative positive self-similar Markov processes}

\author{Matija Vidmar}
\address{Department of Mathematics, University of Ljubljana, Slovenia}
\email{matija.vidmar@fmf.uni-lj.si}

\begin{abstract}
For a spectrally negative positive self-similar Markov process with an a.s. finite overall supremum we provide, in tractable detail, a kind of conditional Wiener-Hopf factorization at the maximum of the absorption time at zero, the conditioning being on the overall supremum and the jump at the overall supremum. In a companion result the Laplace transform of said absorption time (on the event that the process does not go above a given level) is identified under no other assumptions (such as the process admitting a recurrent extension and/or hitting zero continuously), generalizing some existent results in the literature. 
\end{abstract}

\thanks{Financial support from the Slovenian Research Agency is acknowledged (research core funding No. P1-0222).}

\keywords{Spectrally negative L\'evy processes; positive self-similar Markov processes; splitting at the maximum; Wiener-Hopf factorization; martingales; time-changes}

\subjclass[2010]{Primary: 60G51, 60G18; Secondary: 60G44} 

\maketitle

\section{Introduction}

A fundamental feature of real-valued L\'evy processes is the independence of the pre-supremum process and of the post-supremum increments of the process, before an independent exponential random time, together with the associated spatio-temporal Wiener-Hopf factorization at the maximum. Through the Lamperti transform for positive self-similar Markov processes (pssMp) this splitting at the maximum of the underlying L\'evy process carries over, in particular, to a conditional, given the value of the overall maximum and the multiplicative jump at the maximum, independence of the time at which the ultimate supremum of the associated pssMp is reached and the time from then until its absorption at zero. Moreover, in the spectrally negative case the, roughly speaking, ``temporal conditional Wiener-Hopf factors'' corresponding to this independence statement can be made explicit. We proceed now to look at this in precise detail.

Let indeed $X=(X_t)_{t\in [0,\infty)}$ be a spectrally negative L\'evy process (snLp) under the probabilities $(\PP_x)_{x\in \mathbb{R}}$ in the filtration $\FF=(\FF_t)_{t\in [0,\infty)}$. This means that $X$ is a  c\`adl\`ag, real-valued $\FF$-adapted process with stationary independent\footnote{Without further qualification it means  ``stationary independent under $\PP_x$ for all $x\in \mathbb{R}$'', similarly when  ``a.s.'' appears with no further qualification it means ``a.s.-$\PP_x$ for all $x\in \mathbb{R}$'', etc.}  increments relative to $\FF$, no positive jumps and non-monotone paths, which, under $\PP_0$,  a.s. vanishes at zero; furthermore, for each $x \in \mathbb{R}$, the law of $X$ under $\PP_x$ is that of $x+X$ under $\PP_0$. We refer to \cite{bertoin,kyprianou,sato,doney} for the general background on (the fluctuation theory of)  L\'evy processes and to \cite[Chapter~VII]{bertoin} \cite[Chapter~8]{kyprianou} \cite[Chapter~9]{doney}  \cite[Section~9.46]{sato} for snLp in particular. As usual we set $\PP:=\PP_0$. Let also $\expp$ be an a.s. strictly positive $\FF$-stopping time such that for some (then unique) $p\in [0,\infty)$, $\PP_x[g(X_{t+s}-X_t)\mathbbm{1}_{\{\expp>t+s\}}\vert\FF_t]=\PP[g(X_s)]e^{-p s}\mathbbm{1}_{\{t<\expp\}}$ a.s.-$\PP_x$ for all $x\in \mathbb{R}$, whenever $\{s,t\}\subset [0,\infty)$ and $g\in \mathcal{B}_\mathbb{R}/\mathcal{B}_{[0,\infty]}$;\footnote{Throughout we will write $\QQ[W]$ for $\EE_\QQ[W]$, $\QQ[W;A]$ for $\EE_\QQ[W\mathbbm{1}_A]$ and $\QQ[W\vert \mathcal{H}]$ for $\EE_\QQ[W\vert \mathcal{H}]$. More generally the integral $\int fd\mu$ will be written $\mu(f)$ etc. For $\sigma$-fields $\AA$ and $\BB$, $\AA/\BB$ will denote the set of $\AA/\BB$-measurable maps; $\mathcal{B}_A$ is the Borel (under the standard topology) $\sigma$-field on $A$.}  in particular $\expp$ is exponentially distributed with rate $p$ ($\expp=\infty$ a.s. when $p=0$) independent of $X$. Finally let $\alpha\in (0,\infty)$. 

\begin{remark}
The conditions on the filtration $\FF$ are natural: If $X$ is adapted, and has independent increments relative to some filtration $\HH=(\HH_t)_{t\in [0,\infty)}$ with $\expp$ independent of $\HH_\infty$ (in particular if $\HH$ is the (completed) natural filtration of $X$ with $\expp$ independent of $X$), then all our assumptions are satisfied if, ceteris paribus, $\FF$ is not given a priori, but rather the filtration $(\HH_t\lor \sigma(\{\{u<\expp\}:u\in [0,t]\}))_{t\in [0,\infty)}$ (viz. the progressive enlargement of $\HH$ by $\expp$, i.e. the smallest enlargement of $\HH$ making $\expp$ into a stopping time) features in lieu of it.
\end{remark}

Now, associated to $X$, $\expp$ and $\alpha$, via the Lamperti transformation \cite{lamperti} (see also \cite[Theorem 13.1]{kyprianou}), is a positive $\alpha^{-1}$-self-similar Markov process $Y=(Y_s)_{s\in [0,\infty)}$, where we understand ``positive'' to mean that $0$ is an absorbing state. We make this precise:

Let $\DD$ be the space of real-valued c\`adl\`ag paths on $[0,\infty)$, endowed with the sigma-field $\DDD$ and canonical filtration $(\DDD_t)_{t\in [0,\infty)}$ of evaluation maps, shift operators $(\theta_t)_{t\in [0,\infty)}$, coordinate process $\xi=(\xi_t)_{t\in[0,\infty)}$. Set $$\mathrm{I}_t:=\int_0^{t }e^{\alpha \xi_u}du,\quad t\in [0,\infty];$$
$$\varphi_s:=\inf\{t\in [0,\infty):\mathrm{I}_t>s\},\quad s\in [0,\infty);$$
and for further an $l\in [0,\infty]$, $$\LL^{l}_s:=
\begin{cases}
e^{ \xi_{\varphi_s}} & \text{for }s\in [0,\mathrm{I}_l)\\
0 & \text{for }s\in [\mathrm{I}_l,\infty).
\end{cases}
$$ Then $Y_s=\LL^{\expp}_s(X)$ for $s\in [0,\infty)$. 

We will write $I_t:=\mathrm{I}_t(X)$, $t\in [0,\infty]$, and $\phi_s:=\varphi_s(X)$, $s\in [0,\infty)$, for short. Define also the filtration $\GG=(\GG_s)_{s\in [0,\infty)}$ by $\GG_s:=\FF_{\phi_s}$ for $s\in [0,\infty)$; $T_0:=\inf\{t\in (0,\infty):Y_t=0\}=I_\expp=\int_0^\expp e^{\alpha X_u}du$; %, the first hitting time of $0$ by/absorption time of the process $Y$; 
and set for convenience $\QQ_y:=\PP_{\log y}$ for $y\in (0,\infty)$ (naturally $\QQ:=\QQ_1$). 

The assumptions on  $X$, $\expp$ and $\FF$ entail that for any $\FF$-stopping time $S$, on $\{S<\expp\}$, $\FF_S$ is independent of $((X_{S+u}-X_S)_{u\in [0,\infty)},\expp-S)$, which has (assuming the probability of $\{S<\expp\}$ is strictly positive) the distribution of $(X,\expp)$ under $\PP_0$ (one proves it first for deterministic $S$, then for $S$ assuming countably many values, then passes to the limit by approximating $S$ from above, in the usual manner). In consequence $Y$ is Markov with life-time $T_0$, cemetery state $0$, in the  filtration $\GG$, under the probabilities $(\QQ_y)_{y\in (0,\infty)}$: clearly it is $\GG$-adapted; moreover, for any $h\in \mathcal{B}_\mathbb{R}/\mathcal{B}_{[0,\infty]}$, $y\in (0,\infty)$, $\{s_1,s_2\}\subset [0,\infty)$, one has a.s.-$\QQ_y$, $\QQ_y[h(Y_{s_1+s_2})\mathbbm{1}_{\{s_1+s_2<T_0\}}\vert \GG_{s_1}]=\QQ_{Y_{s_1}}[h(Y_{s_2});s_2<T_0]\mathbbm{1}_{\{s_1<T_0\}}$ (and then, for any $s\in [0,\infty)$ and $H\in \DDD/\mathcal{B}_{[0,\infty]}$,  $\QQ_y[H(\theta_s(Y))\vert \GG_s]=\QQ_{Y_s}[H(Y)]$ a.s.-$\QQ_y$ on $\{s<T_0\}$). Besides, $Y$ respects the $1/\alpha$-self similarity property: for each $c\in (0,\infty)$ and $y\in (0,\infty)$, the law of $(cY_{sc^{-\alpha}})_{s\in [0,\infty)}$ under $\QQ_y$ is that of $Y$ under $\QQ_{cy}$. %; and it inherits the quasi left-continuity on $[0,T_0)$ in $\GG$ from the quasi left-continuity of $X$ in $\FF$.% and c\`adl\'g on $[0,T_0)$. 

Conversely, any sufficiently regular positive $1/ \alpha$-self-similar Markov process with no positive jumps and non-monotone paths is got from some (possibly killed) snLp $X$ by the transformation $\LL$  \cite[Theorem 13.1]{kyprianou}. 

We refer to \cite[Chapter 13]{kyprianou} for a further account of the properties of pssMp, to  \cite{ckr} for their general fluctuation theory, and to \cite[Chapter 13.7]{kyprianou} for those/that of the spectrally negative type in particular.

%Thus $Y$ is a pssMp with no positive jumps and non-monotone paths. According to the seminal result of Lamperti \cite{lamperti} the latter class of processes  is indeed exhausted (in law, or pathwise, but then possibly /when $p>0$/ only on an extension of the underlying probability space) by the representation of $Y$ by $X$ and $\expp$ just described. 

Denote next by $\overline{Y}=(\overline{Y}_s)_{s\in [0,\infty]}$ (resp. $\overline{X}=(\overline{X}_t)_{t\in [0,\infty]}$) the running supremum process of $Y$ (resp. $X$) and define  $$L:=\sup\{s\in [0,\infty):\overline{Y}_s=Y_s\}\text{ and }G:=\sup\{t\in [0,\expp):\overline{X}_t=X_t\}.$$ %Of course $\overline{Y}_\infty$ (resp. $\overline{X}_\infty$)  is the overall supremum of $Y$ (resp. $X$). 

We insist that 
\begin{assumption}\label{assumption}
When $p=0$, then $X$ drifts to $-\infty$.
\end{assumption}
This assumption is equivalent to the following being  a.s. finite quantities:  $\overline{Y}_\infty=\overline{Y}_L=e^{\overline{X}_G}=e^{\overline{X}_\expp}$ -- the overall supremum of $Y$; $L=I_G=\int_0^Ge^{\alpha X_u}du$ -- the last time $Y$ is at its running supremum; and $T_0$ -- the absorption time of $Y$. %; and $G$ -- the last time the underlying process $X$ is at its running supremum strictly before time $\expp$. 
%Note that thanks to our assumption also $T_0<\infty$ a.s. 
Indeed the following trichotomy is well-known: a.s., $Y$ never reaches zero and its overall supremum is infinite, hits zero continuously, or hits zero by a jump, according as $X$ does not drift to $-\infty$ and $p=0$, $X$ drifts to $-\infty$ and $p=0$, or $p>0$.

By the independence statement of the Wiener-Hopf factorization for $X$ (see Subsection~\ref{wiener-hopf} below) we have that (I) the pair $(L,\overline{Y}_\infty)$ is independent of   $\jump:=\frac{Y_L}{\overline{Y}_\infty}\mathbbm{1}_{\{L<T_0\}}+\mathbbm{1}_{\{L=T_0\}}\overset{\text{a.s.}}{=}e^{X_G-\overline{X}_\expp}$, which is the size of the multiplicative jump at $L$ for $Y$ on $\{L<T_0\}$ and $1$ otherwise: $\jump$ is only not a.s. equal to $1$, when $0$ is irregular for $(-\infty,0)$  for the process $X$  (which is equivalent to $X$ being of finite variation); and indeed (II) conditionally on $\overline{Y}_\infty$ and $\jump$ (or just $\overline{Y}_\infty$), $L$ is independent of %$(T_0-L,\overline{Y}_{T_0-}/Y_L)$, where 
$T_0-L=\int_G^\expp e^{\alpha X_u}du=e^{\alpha \overline{X}_G}e^{\alpha(X_G-\overline{X}_G)}\int_G^\expp e^{\alpha (X_u-X_G)}du$, which is the amount of time that elapses from $Y$ reaching its overall supremum and until absorption at zero. %and $Y_{T_0-}/Y_L=e^{X_{\expp-}-X_G}$ is the position relative to the the position at the maximum from which $Y$ jumps to zero (it is of course not a.s. $0$ only if $p>0$; we set $X_{\infty-}:=\limsup_{t\to\infty}X_t$, which is equal to $-\infty$ a.s. when $p=0$ /because of Assumption~\ref{assumption}/). T

As indicated briefly at the start, this may be interpreted as a kind of conditional Wiener-Hopf factorization at the maximum of $T_0$, and in this paper we provide explicitly the associated ``conditional Wiener-Hopf factors''. That is to say, we compute, in tractable detail, for $y\in (0,\infty)$, $\beta\in [0,\infty)$,   the conditional Laplace transforms (i)  $\QQ_y\left[e^{-\beta L}\vert \overline{Y}_\infty\right]$ (Proposition~\ref{proposition:laplace-transforms}) and (ii) $\QQ_y\left[e^{-\beta (T_0-L)}\vert \overline{Y}_\infty,\jump\right]$ (Proposition~\ref{proposition:laplace-of-residual-time}): specifically, these Laplace transforms are given as algebraic expressions involving certain power series, whose coefficients are expressed directly in terms of the Laplace exponent of $X$. In addition, (iii)  the law of $\jump$ can be identified (Proposition~\ref{proposition:law-of-jump}). This then yields an explicit conditional, given $\overline{Y}_\infty$ and $\jump$, factorization of $T_0$ at the maximum, see Theorem~\ref{theorem}, which is our main result; and, because the law of $\overline{Y}_\infty$ is also known (see Subsection~\ref{subsection:first-passage} below), it characterizes the joint quadruple law of $(L,\overline{Y}_\infty,\jump,T_0-L)$. 

\begin{question}\label{section:concluding}
Can a suitable tractable conditional factorization/joint law be obtained if one adds into the mix also $Y_{T_0-}/(J\overline{Y}_\infty)$, the multiplicative jump at absorption relative to the position at the maximum? This is left open.% to future research.
\end{question}

 Literature-wise we note that a different kind of (unconditional) Wiener-Hopf type factorization of the exponential functional of L\'evy processes is considered in \cite{pardo}. 

%In terms of our actual results, however, we will have to be content with providing: (i); (ii); and (iii) in the case when $p=0$ and $X$ drifts to $-\infty$ (equivalently $Y$ is continuously absorbed at zero) and either $X$ is of finite variation or else has a non-trivial diffusion component: more precisely, we will provide here the conditional Laplace transform $\QQ_y\left[e^{-\beta(T_0-L)}\vert  \overline{Y}_L,\jump \right]$ ($y\in (0,\infty)$, $\beta\in [0,\infty)$). The analysis of (iii) to include the possible jump to zero is left open, mainly because the joint Laplace transform-law of the absorption time of $Y$ and its jump to zero on the event that $Y$ does not go above a given level is not known/has not yielded its secrets to us.

% the case when $Y$ jumps to zero or $X$ is of infinite variation without a diffusion component is left open (in the latter case on account of the technicalities involved, whereas the former appears to be fundamentally a more difficult problem).

The organization of the remainder of this paper is as follows. Section~\ref{section:preliminaries} recalls the relevant fluctuation theory of snLp and introduces further necessary  notation in parallel. Then in Section~\ref{section:Laplace-transform} we develop the Laplace transform of the absorption time of $Y$ on the event that $Y$ does not go above a given level, which is a key result for the investigations of Section~\ref{section:main}, in which we finally present the laws of quantities at the maximum as announced in (i)-(ii)-(iii) above. Section~\ref{section:concluding} concludes with some remarks on (possible) applications. 

\section{Preliminaries and further notation concerning the snLp $X$}\label{section:preliminaries}

\subsection{Some basic fluctuation theory facts}\label{subsection:first-passage} 
The following is standard; we provide some specific references for the reader's benefit. % and may be found in \cite[Chapter~VII]{bertoin} \cite[Chapter~8]{kyprianou} \cite[Chapter~9]{doney}  \cite[Section~9.46]{sato}.

(i). Let $\psi$ be the Laplace exponent of $X$, $\psi(\lambda):=\log \PP[e^{\lambda X_1}]$ for $\lambda\in [0,\infty)$. It has the representation 
\begin{equation}\label{eq:laplace-exp}
\psi(\lambda)=\frac{\diffusion}{2}\lambda^2+\mu\lambda+\int(e^{\lambda y}-\mathbbm{1}_{[-1,0)}(y)\lambda y-1)\nu(dy),\quad \lambda\in [0,\infty),
\end{equation}
for some (unique) $\mu\in \mathbb{R}$, $\diffusion\in [0,\infty)$, and measure $\nu$ on $\mathcal{B}_\mathbb{R}$, supported by $(-\infty,0)$, and satisfying $\int (1\land y^2 )\nu(dy)<\infty$. When $X$ has paths of finite variation, equivalently $\diffusion=0$ and $\int (1\land \vert y\vert)\nu(dy)<\infty$, we set $\drift:=d+\int_{[-1,0)}\vert y\vert\nu(dy)$; in this case we must have $\drift\in (0,\infty)$ and $\nu$ non-zero. Differentiating under the integral sign in \eqref{eq:laplace-exp}, $\psi$ is seen to be strictly convex, continuous,  with $\lim_\infty\psi=\infty$, and indeed with $\lim_\infty \psi'=\infty$ provided $X$ has paths of infinite variation.

We also let $\Phi$ be the right-continuous inverse of $\psi$, $\Phi(q):=\inf\{\lambda\in [0,\infty):\psi(\lambda)>q\}$ for $q\in [0,\infty)$, so that $\Phi(0)$ is the largest zero of $\psi$. Recall $X$ drifts to $\infty$, oscillates or drifts to $-\infty$, according as $\psi'(0+)>0$, $\psi'(0+)=0$ or $\psi'(0+)<0$ (the latter being equivalent to $\Phi(0)>0$). 

\begin{remark}
Assumption~\ref{assumption} means that $\Phi(p)>0$.
\end{remark}

(ii). For real $x\leq a$, $q\in [0,\infty)$, we have the classical identity \cite[Eq.~(3.15)]{kyprianou}
\begin{equation}\label{eq:classical}
\PP_x[e^{-q\tau_a^+};\tau_a^+<\infty]=e^{-\Phi(q)(a-x)},
\end{equation}
where $\tau_a^+:=\inf\{t\in (0,\infty):X_t>a\}$. \eqref{eq:classical} renders $\overline{X}_\expp-X_0$ to have the exponential distribution of rate $\Phi(p)$.% which in turn implies that $\overline{X}$ has finite exponential moments of all orders at every finite deterministic time.

(iii). Associated to the solution \eqref{eq:classical} of the first passage upwards problem is the family, in $\lambda\in[0,\infty)$, of exponential $\FF$-martingales $\mathcal{E}^\lambda=(\mathcal{E}^\lambda_t)_{t\in [0,\infty)}$, 
\begin{equation} \label{eq:exp-mtgs}
\mathcal{E}^\lambda_t=e^{\lambda(X_t-X_0)-\psi(\lambda)t},\quad t\in[0,\infty). 
\end{equation}
%These martingales give rise to the Esscher transform: under suitable technical assumptions on the filtration $\FF$, which allow for an application of Kolmogorov's extension theorem \cite[Chapter V]{parthasarathy}, and that one may assume to hold without loss of generality when proving distributional identity, for each $\lambda\in [0,\infty)$ and $y\in \mathbb{R}$, there exists a unique probability measure $\PP_y^{\lambda}$ on $\mathcal{F}_\infty$ such that, for all $\FF$-stopping times $\tau$ (equivalently all deterministic times $\tau$ \cite[Corollary~3.11]{kyprianou}) 
%\begin{equation}\label{eq:Esscher}
%\frac{d\PP_y^{\lambda}\vert_{\FF_\tau}}{d\PP_y\vert_{\FF_\tau}}=\mathcal{E}^\lambda_\tau\text{ on }\{\tau<\infty\}.
%\end{equation} 
%Of course we set $\PP_0^{\lambda}=:\PP^{\lambda}$.  Under the measures $(\PP_y^{\lambda})_{y\in  \mathbb{R}}$, $X$ is a snLp with Laplace exponent $\psi(\lambda+\cdot)-\psi(\lambda)$, and in particular the same diffusion coefficient as $X$ \cite[Corollary~3.10]{kyprianou}. 

(iv). Concerning the position of $X$ at first passage upwards, one has the resolvent identity \cite[Theorem~2.7(ii)]{kkr}: for real $x\leq a$, $q\in [0,\infty)$, and $f\in \mathcal{B}_{\mathbb{R}}/\mathcal{B}_{[0,\infty]}$,
\begin{equation}\label{eq:resolvent}
\int_0^\infty e^{-q t}\PP_x[f(X_t), t<\tau_a^+]dt=\int_{-\infty}^af(y)\left( e^{-\Phi(q)(a-x)}\Wm(a-y)-\Wm(x-y) \right)dy,
\end{equation}
where, for $q\in [0,\infty)$, $\Wm:\mathbb{R}\to [0,\infty)$ is the $q$-scale function of $X$, characterized by being continuous on $[0,\infty)$, vanishing on $(-\infty,0)$, and having Laplace transform 
\begin{equation}\label{eq:laplace}
\int_0^\infty e^{-\theta x}\Wm(x)dx=\frac{1}{\psi(\theta)-q},\quad \theta\in (\Phi(q),\infty).
\end{equation}
The reader is referred to \cite{kkr} for further background on scale functions of snLp. As usual we set $W^{(0)}=:W$ and recall that $W(0)>0$ or $W(0)=0$ according as $X$ has paths of finite or infinite variation \cite[Lemma~3.1]{kkr}. %, which feature in a variety of other fluctuation identities. 

%(v). The law of the overall infimum $\underline{X}_\infty:=\inf\{X_t:t\in[0,\infty)\}$ of $X$ in case it drifts to $\infty$ is given by \cite[Eq.~(22)]{kkr}
%\begin{equation}\label{eq:inf-scales}
%W(x)=W(\infty)\PP(-\underline{X}_\infty\leq x),\quad x\in \mathbb{R},
%\end{equation}
%where, as usual, we have set $W^{(0)}=:W$ and $W(\infty):=\lim_{x\uparrow \infty}W(x)=\psi'(0+)^{-1}$ \cite[Lemma~3.3]{kkr}. 
%
%(vi). For $q\in [0,\infty)$,
%\begin{equation}\label{eq:esscher-scales}
%\Wm(x)=e^{\Phi(q) x}W_{\Phi(q)}(x),\quad x\in [0,\infty),
%\end{equation}
%where $W_{\Phi(q)}$ is the $0$-scale function associated to $X$ under the measures $\PP^{\Phi(q)}_x$ \cite[Eq.~(25)]{kkr}; furthermore, under the measures $\PP^{\Phi(q)}_x$, $X$ drifts to $\infty$ unless $\psi'(0+)=0=q$. 

\subsection{Wiener-Hopf factorization}\label{wiener-hopf} 
The following falls under the umbrella of the Wiener-Hopf factorization.

(i). Because $X$ is regular upwards \cite[p.~232]{kyprianou} the process $(X_t;t\in [0,G))$ is independent of the process $(X_{G+t}-X_{G-};t\in [0,\expp-G))$  \cite[Lemma~VI.6(ii)]{bertoin}, and \cite[comment following Lemma~VI.6(ii)]{bertoin} $X_{G-}=X_G$ a.s. if and only if $X$ is regular downwards, i.e. $X$ has paths of infinite variation \cite[p.~232]{kyprianou}. In particular, the Wiener-Hopf factors $(G,\overline{X}_\expp)$ and $(\expp-G,\overline{X}_\expp-X_\expp)$ are independent. When $p>0$, then their Laplace transforms are given, for $\{\gamma,\delta\}\subset [0,\infty)$,  by \cite[Theorem~6.15(ii)]{kyprianou} $\PP[e^{-\gamma G-\delta \overline{X}_\expp}]=\frac{\kappa(p,0)}{\kappa(p+\gamma,\delta)}$ and $\PP[e^{-\gamma (\expp-G)+\delta (X_\expp-\overline{X}_\expp})]=\frac{\hat{\kappa}(p,0)}{\hat{\kappa}(p+\gamma,\delta)}$, where $\kappa$ and $\hat{\kappa}$ are the Laplace exponents of the increasing and decreasing ladder heights processes, respectively. The latter are themselves in turn expressed explicitly as  \cite[Subsection~6.5.2]{kyprianou}  $\kappa(\gamma,\delta)=\Phi(\gamma)+\delta$, $\hat{\kappa}(\gamma,\delta)=\frac{\gamma-\psi(\delta)}{\Phi(\gamma)-\delta}$, $\{\gamma,\delta\}\subset [0,\infty)$ (the expression for $\hat{\kappa}$ being understood in the limiting sense when $\Phi(\gamma)=\delta$). 

(ii). We have $\PP(\overline{X}_\expp=X_\expp)=\lim_{\beta\to\infty}\PP [e^{\beta(X_\expp-\overline{X}_\expp)}]=\lim_{\beta\to\infty}\frac{p(\Phi(p)-\beta)}{\Phi(p)(p-\psi(\beta))}$, so that 
\begin{equation}\label{eq:at-sup}
\PP(\overline{X}_\expp=X_\expp)\text{ is }=\frac{p}{\Phi(p)\drift}\text{ or is }=0,
\end{equation}
according as $X$ has paths of finite or infinite variation. For convenience we shall understand $\drift=\infty$ and (hence) $\frac{p}{\Phi(p)\drift}=0$ when $X$ has paths of infinite variation. Note also that $\frac{p}{\Phi(p)\drift}<1$.
%(iii). In case $X$ drifts to $\infty$, from the equality in law $\underline{X}_\expp\sim X_\expp-\overline{X}_\expp$ under $\PP$, we obtain, in the limit as $p\downarrow 0$, 
%\begin{equation}\label{eq:laplace-of-inf}
%\PP\left[e^{\delta \underline{X}_\infty}\right]=\psi'(0+)\frac{\delta}{\psi(\delta)},\quad \delta\in [0,\infty).
%\end{equation}
%It is shown in \cite[Corollary~1]{chaumont} that  ($\CC_t:=\{f\in \DDD_t/\mathcal{B}_{[0,\infty)}\text{ continuous in the Skorohod topology and bounded}\}$)
%
%$$\nn(F;t<\zeta)=\lim_{x\uparrow 0}\frac{\PP_x[F;t<\tau_0^+]}{\frac{1-e^{\Phi(0)x}}{\Phi(0)}},\quad F\in \CC_t, t\in (0,\infty),$$ provided $0$ is regular for $(-\infty,0)$ for the process $X$.

\subsection{Excursions from the maximum} 
We gather here some facts concerning the It\^o point process  of excursions \cite{ito,blumenthal} from the maximum of $X$. For what follows, besides the general references given in the Introduction, the reader may also consult \cite{greenwood-pitman,rogers} \cite[passim]{fkp}. 
%\begin{enumerate}[(i), leftmargin=*]
%\item 

(i). Under $\PP$ the running supremum $\overline{X}$ serves as a continuous local time for $X$ at the maximum. Its right-continuous inverse is the process of first passage times $\tau^+=(\tau_a^+)_{a\in [0,\infty)}$. The time axis $[0,\infty)$ is partitioned $\PP$-a.s. into $\mm:=\overline{\{t\in [0,\infty):\overline{X}_t=X_t\}}=\{t\in [0,\infty):\overline{X}_t=X_t\text{ or }\overline{X}_{t}=X_{t-}\}$, the closure of the random set of times when $X$ is at its running supremum, and the open intervals $(\tau_{a-}^+,\tau_a^+)$, $a\in \dd:=\{b\in (0,\infty):\tau_{b-}^+<\tau_b^+\}$; the visiting set $\mm$ has $\PP$-a.s. no isolated points.

%item 
(ii). The process $\epsilon=(\epsilon_a)_{a\in (0,\infty)}$ defined for $a\in (0,\infty)$ by $$\epsilon_a(u):=X_{(\tau_{a-}^++u)\land \tau_a^+}-X_{\tau_{a-}^+-}=X_{(\tau_{a-}^++u)\land \tau_a^+}-a,\quad u\in [0,\infty),$$ if $a\in \dd$, $\epsilon_a:=\Delta$ otherwise, where $\Delta\notin \DD$ is a coffin state, is, under $\PP$, a Poisson point process (Ppp) with values in $(\DD,\DDD)$, in the filtration $\FF_{\tau^+}=(\FF_{\tau^+_a})_{a\in [0,\infty)}$, absorbed on first entry into a path $\omega\in \DD$ for which $\zeta(\omega)=\infty$, where $\zeta(\omega):=\inf\{t\in (0, \infty):\omega(t)\geq 0\}$, and whose characteristic measure we will denote by $\nn$ (so the intensity measure of $\epsilon$ is $\mathsf{l}\times \nn$, where $\mathsf{l}$ is Lebesgue measure on $\mathcal{B}_{(0,\infty)}$). Note that $\nn$ is carried by the set $\{\zeta>0\}$ and also by $\{\xi_0<0\}$ (resp. $\{\xi_0=0\}$) when $X$ has paths of finite (resp. infinite) variation. %, that the definition of $\epsilon$ is such that it keeps track of the possible jumps of $X$ at the start of its excursions from the maximum, 
Besides, $\PP$-a.s., for all $a\in \dd$, $\zeta(\epsilon_a)=\tau_{a}^+-\tau_{a-}^+$. 

%\item 
(iii). The compensation formula for Ppp tells us that $\PP \left[\sum_{a\in \dd}Z_{a}(\epsilon_a)\right]=\PP\left[\int_0^{\overline{X}_\infty}\int Z_a(\omega)\nn(d\omega)da\right]$ whenever $Z\in \PPP_{\FF_{\tau^+}}\otimes \DDD/\mathcal{B}_{[0,\infty]}$, where $ \PPP_{\FF_{\tau^+}}$ is the $\FF_{\tau^+}$-predictable $\sigma$-field. In particular note that if $R\in \PPP_\FF\otimes \DDD/\mathcal{B}_{[0,\infty]}$, where $\PPP_\FF$ is now the $\FF$-predictable $\sigma$-field, then $(R_{\tau_{a-}^+})_{a\in (0,\infty)}\in \PPP_{\FF_{\tau^+}}\otimes \DDD/\mathcal{B}_{[0,\infty]}$. 

%\item 
(iv). The measure $\nn$ has the following Markov property: for all $t\in [0,\infty)$, $$\nn[G\cdot  H\circ \theta_t;t<\zeta]=\nn[G\PP_{\xi_t}[H];t<\zeta],\quad G\in \DDD_t/\mathcal{B}_{[0,\infty]},H\in \DDD/\mathcal{B}_{[0,\infty]}.$$
%\item 

(v). In case $X$ has paths of infinite variation (equivalently, $X$ is regular downwards) the result of \cite[Corollary~1]{chaumont-doney} applied to $-X$ (in conjunction with  \cite[Eqs.~(2.5) and~(2.8)]{chaumont-doney} therein, \eqref{eq:classical}, and the fact that in this case $\nn(1-e^{-p\zeta}\mathbbm{1}_{\{\zeta<\infty\}})=\Phi(p)$, see Remark~\ref{remark:law-of-zeta})  implies that for all $t\in (0,\infty)$ and then all $F\in \DDD_t/\mathcal{B}_{\mathbb{R}}$ bounded and continuous in the Skorokhod topology on $\DD$,
\begin{equation}\label{eq:conditioning}
\nn\left[F;t<\zeta\right]=\kk\lim_{x\uparrow 0}\frac{\PP_x\left[F;t<\tau_0^+\right]}{\gg(x)},
\end{equation}  where $$\gg(x):=\lim_{q\downarrow 0}\frac{1-e^{\Phi(q)x}}{\Phi(q)}=
\begin{cases}
\frac{1-e^{\Phi(0)x}}{\Phi(0)}, & \Phi(0)>0\\
-x, & \Phi(0)=0
\end{cases},\quad x\in (-\infty,0),
$$
and with $\kk\in (0,\infty)$ depending on the characteristics of $X$ only. %(The presence of the factor $\Phi(0)$ (and not some other constant) depends on the choice of local time at the maximum. We have taken for it the running supremum and the normalization constant may be determined from  $\nn(\zeta=\infty)=\Phi(0)$ by taking $F=\mathbbm{1}_{\{\zeta=\infty\}}$, which is continuous in the Skorokhod topology a.e.-$\nn$.)
%\end{enumerate}

(vi). One has the following representation of the scale function $W$ \cite[Eq.~(31)]{kkr}:
\begin{equation}\label{eq:scale-excursion}
\frac{W(x)}{W(a)}=\exp\left\{-\int_x^a\nn(-\underline{\xi}_{\zeta}>y)dy\right\},\quad 0\leq x< a\text{ real},
\end{equation}
where $\underline{\xi}=(\underline{\xi}_t)_{t\in [0,\infty]}$ is the running infimum process of $\xi$. % In \eqref{eq:scale-excursion} one may pass to the limit $a\uparrow\infty$ by monotone convergence (the result being useful when $W(\infty)<\infty$, i.e. when $X$ drifts to $\infty$).
%which in conjunction with \eqref{eq:scale-excursion} yields information on the convergence/divergence of $\int_0^a\nn(-\underline{\xi}_{\zeta}>y)dy$. 

\subsection{Point process of jumps} Under $\PP$ the process $\Delta X=(\Delta X_t)_{t\in (0,\infty)}$ of the jumps of $X$ is a Ppp in the filtration $\FF$ with values in $(\mathbb{R}\backslash \{0\},\mathcal{B}_{\mathbb{R}\backslash \{0\}})$ and $0$ as a coffin state, whose characteristic measure is the restriction to $\mathcal{B}_{\mathbb{R}\backslash \{0\}}$ of $\nu$. In this case the compensation formula for Ppp states that $\PP\left[\sum_{t\in\jj}Z_{t}(\Delta X_t)\right]=\PP\left[\int_0^\infty \int Z_t(x)\nn(dx)dt\right]$ for $Z\in \PPP_\FF\otimes \mathcal{B}_{\mathbb{R}\backslash \{0\}}/\mathcal{B}_{[0,\infty]}$, where $\jj:=\{t\in (0,\infty):\Delta X_t\ne 0\}$ is the set of jump times of $X$. See for instance \cite[Theorem~I.1]{bertoin}.

\subsection{Patie's scale functions} We set, assuming $\Phi(p)\notin \alpha\mathbb{N}$,
$$\JJ(y):=\sum_{k=0}^{\infty}\aa_ky^k,\quad y\in [0,\infty),\text{ where }\aa_k:=\left(\prod_{l=1}^k(\psi(l\alpha)-p)\right)^{-1},\quad k\in \mathbb{N}_0,$$
(the condition on $\Phi(p)$ ensures all the $\aa_k$ are well-defined) and, whether or not $\Phi(p)\notin \alpha\mathbb{N}$, 
$$\II(y):=\sum_{k=0}^{\infty}\bb_k y^k,\quad y\in [0,\infty),\text{ where }\bb_k:=\left(\prod_{l=1}^k(\psi(\Phi(p)+l\alpha)-p)\right)^{-1},\quad k\in \mathbb{N}_0,$$
with (as usual) the empty product being interpreted as $=1$. These  power series converge absolutely, indeed $\lim_{k\to\infty}\frac{\aa_{k+1}}{\aa_k}=\lim_{k\to\infty}\frac{\bb_{k+1}}{\bb_k}=0$, and the coefficients $\aa_k$ are ultimately of the same sign (even all strictly positive when $\Phi(p)<\alpha$). Finally,
\begin{equation}\label{exit-1}
\PP_x\left[e^{-\gamma I_{\tau_a^+}};\tau_a^+<\expp\right]=\frac{e^{\Phi(p)x}\II(\gamma e^{\alpha x})}{e^{\Phi(p)a}\II(\gamma e^{\alpha a})},\quad x\leq a\text{ real},\gamma\in[0,\infty);
\end{equation}
in other words
\begin{equation}\label{exit}
\QQ_y\left[e^{-\gamma T_d^+};T_d^+<\infty\right]=\left(\frac{y}{d}\right)^{\Phi(p)}\frac{\II(\gamma y^\alpha)}{\II(\gamma d^\alpha)},\quad 0<y\leq d\text{ real},\gamma\in[0,\infty),
\end{equation}
where $T_d^+:=\inf\{s\in(0,\infty):Y_s>d\}$ for $d\in (0,\infty)$. See  \cite[Theorem~2.1]{pierre} (or  \cite[Section~13.7]{kyprianou}). 
\begin{remark}
$\psi-p$ is the Laplace exponent of the snLp $X$ killed (and sent to $-\infty$)  at time $\expp$. $\psi(\Phi(p)+\cdot)-p$ is the Laplace exponent of $X$ under the Esscher transform corresponding to the exponential martingale $\mathcal{E}^{\Phi(p)}$. In this sense $\JJ$ and $\II$ may both be viewed as being just two special instances of the same underlying power series that is in general associated to the Laplace exponent of a (possibly killed) snLp and an index $\alpha$. 
\end{remark}

\section{Laplace transform of the absorption time of $Y$ (on the event that $Y$ does not go above a given level)}\label{section:Laplace-transform}
\begin{definition}\label{definition:MM}
We introduce the function
\begin{equation}\label{eq:mm}
\MM(y,d):=\JJ(\beta y^\alpha)-\left(\frac{y}{d}\right)^{\Phi(p)}\frac{\II(\beta y^\alpha)}{\II(\beta d^\alpha)}\JJ(\beta d^\alpha),\quad \{y,d\}\subset (0,\infty),y\leq d, \beta\in [0,\infty),
\end{equation}
where the expression must be understood in the limiting sense (as $\alpha\to \Phi(p)/m$), when $\Phi(p)=\alpha m$ for some $m\in \mathbb{N}$: it will be seen from Corollary~\ref{corollary} that this limit exist a priori; and it is identified analytically in Remark~\ref{remark:limits:ii} to follow.
\end{definition}
\begin{remark}\label{remark:limits:ii}  
Suppose $\Phi(p)=m\alpha$ for an $m\in \mathbb{N}$. Then we may write, for $\alpha'\in (\frac{\Phi(p)}{m+1},\frac{\Phi(p)}{m-1})\backslash \{\frac{\Phi(p)}{m}\}$, setting  provisionally $\widetilde{\bbb_k}:=(\prod_{l=1}^k(\psi(m\alpha'+l\alpha')-p))^{-1}$ for $k\in \mathbb{N}_0$,

$$\MMM(y,d)=\sum_{k=0}^{m-1} \aaa_k(\beta y^{\alpha'})^k-\left(\frac{y}{d}\right)^{\Phi(p)}\frac{\III(\beta y^{\alpha'})}{\III(\beta d^{\alpha'})}\sum_{k=0}^{m-1} \aaa_k(\beta d^{\alpha'})^k$$
$$+\aaa_{m-1}\beta^m\frac{y^{m\alpha'}\sum_{k=0}^\infty \widetilde{\bbb_k}(\beta y^{\alpha'})^k-\left(\frac{y}{d}\right)^{\Phi(p)}\frac{\III(\beta y^{\alpha'})}{\III(\beta d^{\alpha'})}d^{m\alpha'}\sum_{k=0}^\infty \widetilde{\bbb_k}(\beta d^{\alpha'})^k}{\psi(m\alpha')-p}$$

$$\xrightarrow{\alpha'\to \frac{\Phi(p)}{m}}\sum_{k=0}^{m-1} \aa_k(\beta y^{\alpha})^k-\left(\frac{y}{d}\right)^{\Phi(p)}\frac{\II(\beta y^{\alpha})}{\II(\beta d^{\alpha})}\sum_{k=0}^{m-1} \aa_k(\beta d^{\alpha})^k$$
$$+\frac{\aa_{m-1}(\beta y^\alpha)^m}{\psi'(\Phi(p))}\Big[\ln\left(\frac{y}{d}\right)\II(\beta y^{\alpha})-\KK(\beta y^\alpha)+\frac{\II(\beta y^\alpha)}{\II(\beta d^\alpha)}\KK(\beta d^{\alpha})\Big]=\MM(y,d),$$ %by l'H\^ospital's rule, 
where
$$\KK(z):=\sum_{k=1}^\infty \cc_kz^k,\quad z\in (0,\infty),$$
with $$\cc_k:=\bb_k\sum_{l=1}^k\frac{\psi'(\Phi(p)+l\alpha)}{\psi(\Phi(p)+l\alpha)},\quad k\in \mathbb{N}$$
%and one uses e.g. the boundedness of $([0,\infty)\ni \lambda\mapsto \frac{\psi'(\lambda)}{\psi(\lambda)})$ in order to pass to the limit (one must verify that one can differentiate under some infinite summation signs).
(it is easy to check that $\psi'/\psi$ is bounded on $[c,\infty)$ for any $c\in (\Phi(0),\infty)$).
\end{remark}
\begin{remark}
It is clear from the definition of $\JJ$ and $\II$ (and from Remark~\ref{remark:limits:ii}) in the case when $\Phi(p)\notin \alpha\mathbb{N}$ (when $\Phi(p)\in \alpha\mathbb{N}$) that $\mathcal{M}^{p,\alpha}_\cdot(\cdot,\cdot)$ is jointly continuous. This also follows in any case from \eqref{eq:exit-conditioned} below:  by quasi left-continuity and regularity of $0$ for $(0,\infty)$ of the
 process $X$ and from the distribution of $\overline{X}_\expp$ not having any finite atoms, one concludes that for each $c\in \mathbb{R}$, a.s. $\cap_{d\in (-\infty,c)}\{\tau_d^+< \expp\}=\{\tau_c^+< \expp\}=\cup_{d\in (c,\infty)}\{\tau_d^+< \expp\}$, so that bounded convergence applies in \eqref{eq:exit-conditioned} (once one has passed from $\PP_x$ to $\PP$ via spatial homogeneity of $X$). Furthermore, the relation $0\leq \MM(y,d)\leq 1-(\frac{y}{d})^{\Phi(p)}$ will also follow directly from \eqref{eq:exit-conditioned} (via \eqref{eq:classical}).
\end{remark}
%\begin{example}
%Let $\psi(\lambda)=\frac{\diffusion}{2}\lambda^2+\drift \lambda$, $\lambda\in [0,\infty)$, corresponding to $X$ being a Brownian motion with drift, and let $p=0$. Necessarily $\drift<0$ ($X$ must drift to $-\infty$ because $p=0$) and $\diffusion>0$ (because processes with monotone paths are excluded from the class of snLp). For maximal simplicity  assume  $\frac{\diffusion \alpha ^2}{2}=-\drift \alpha=1$, so that $\psi(\lambda)=\frac{\lambda}{\alpha}(\frac{\lambda}{\alpha}-1)$, $\lambda\in[0,\infty)$.
%Then: $\Phi(0)=\alpha\in \alpha\mathbb{N}$; $\bb_k=\frac{1}{k!(k+1)!}$ for $k\in \mathbb{N}_0$, hence $\II(y)=\frac{e^y-1}{y}$ for $y\in (0,\infty)$; $\cc_k=$ for $k\in \mathbb{N}$; and hence . 
%\end{example}

\begin{proposition}\label{proposition:mtgs-exit}
Assume $\Phi(p)\notin \alpha\mathbb{N}$. Let $\beta\in [0,\infty)$ and set
$$M_t:=e^{-\beta I_{t\land \expp}}\JJ(\beta e^{\alpha X_{t}}\mathbbm{1}_{\{t<\expp\}}),\quad t\in [0,\infty),$$
and 
$$N_s:=e^{-\beta (s\land T_0)}\JJ(\beta Y_s^\alpha),\quad s\in [0,\infty).$$
 Then:
\begin{enumerate}[(i)]
\item\label{prelim:i} For each $y\in (0,\infty)$, under $\QQ_y$, $N$ is a martingale in the filtration $\mathcal{G}$.
\item\label{prelim:ii} For each $c\in \mathbb{R}$ and  $x\in \mathbb{R}$, under $\PP_x$,
$M^{\tau_c^+}$ is a martingale in the filtration $\FF$.\footnote{$M^{\tau_c^+}$ is the process $M$ stopped at $\tau_c^+$.}
\end{enumerate}
\end{proposition}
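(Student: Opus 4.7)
Plan. I would attack (ii) first via an Itô/compensation argument on the $\FF$-semimartingale $(X_t,I_t)$ that exploits the defining recursion $(\psi(k\alpha)-p)\aa_k=\aa_{k-1}$, and then deduce (i) from (ii) by the Lamperti time change. Write $f(x):=\JJ(\beta e^{\alpha x})$. Each Lévy exponential $e^{k\alpha X_t}$ is an $\FF$-semimartingale with predictable drift $\psi(k\alpha)\int_0^t e^{k\alpha X_s}\,ds$, and integration by parts against the continuous finite-variation process $e^{-\beta I_t}$ (using $dI_t=e^{\alpha X_t}\,dt$) yields
\begin{equation*}
d\bigl(e^{-\beta I_t}e^{k\alpha X_t}\bigr) = d(\text{loc.\ mart.}) + e^{-\beta I_t}\bigl[\psi(k\alpha)e^{k\alpha X_t}-\beta e^{(k+1)\alpha X_t}\bigr]\,dt.
\end{equation*}
Summing $\aa_k\beta^k$ times this over $k\in\mathbb N_0$ (the interchange being justified on $[0,\tau_c^+]$ by $X\leq c$ and the absolute convergence of $\JJ$ on $[0,\beta e^{\alpha c}]$), the recursion together with $\psi(0)=0$, $\aa_0=1$ telescopes the drift into
$$d\bigl(e^{-\beta I_t}f(X_t)\bigr) = d(\text{loc.\ mart.}) + p\,e^{-\beta I_t}\bigl(f(X_t)-1\bigr)\,dt.$$

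The process $M$ agrees with $e^{-\beta I_{t\wedge\expp}}f(X_{t\wedge\expp})$ for $t<\expp$ and takes the constant value $e^{-\beta I_\expp}$ for $t\geq\expp$ (since $\JJ(0)=\aa_0=1$), and so differs from it only by a single jump of size $e^{-\beta I_\expp}(1-f(X_\expp))$ at the totally inaccessible $\FF$-stopping time $\expp$. Because $\expp$ is independent of $X$ and $\Exp(p)$-distributed, $\mathbbm{1}_{\{\,\cdot\,\geq\expp\}}-p(\,\cdot\,\wedge\expp)$ is an $\FF$-martingale; consequently the predictable compensator of this jump is $p\mathbbm{1}_{\{t<\expp\}}\,e^{-\beta I_t}(1-f(X_t))\,dt$, which exactly cancels the drift produced above. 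Hence $M$ is a local $\FF$-martingale. On $[0,\tau_c^+]$ one has $e^{\alpha X_t}\leq e^{\alpha c}$, $e^{-\beta I_{t\wedge\expp}}\leq 1$, and $|\JJ|$ is bounded on $[0,\beta e^{\alpha c}]$ by continuity, so $M^{\tau_c^+}$ is bounded and hence a true $\FF$-martingale.

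For (i), from $Y_s=e^{X_{\phi_s}}$ on $\{s<T_0=I_\expp\}$ (and $Y_s=0$ thereafter) combined with $\JJ(0)=1$, one checks that $N_s=M_{\phi_s}$ for every $s\geq 0$. Taking $c=\log d$ and using that $X$ has no positive jumps so $X_{\tau_c^+}=c$ and $I_{\tau_c^+}=T_d^+$ on $\{\tau_c^+<\expp\}$, a short case analysis gives $N_{s\wedge T_d^+}=M^{\tau_c^+}_{\phi_s}$. Since $(\phi_s)_{s\geq 0}$ is an increasing family of $\FF$-stopping times with $\GG_s=\FF_{\phi_s}$, optional sampling applied to the bounded martingale $M^{\tau_c^+}$ of (ii) shows that $N^{T_d^+}$ is a $\GG$-martingale under $\QQ_y$. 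Letting $d\to\infty$: under Assumption~\ref{assumption}, $\overline{Y}_\infty<\infty$ a.s., so $T_d^+=+\infty$ for every $d>\overline{Y}_\infty$, giving $N_{s\wedge T_d^+}\to N_s$ pointwise. A uniform-integrability estimate leveraging the exponential law of $\log\overline{Y}_\infty-\log y$ (rate $\Phi(p)$) and the growth of the entire function $\JJ$ then upgrades this to $L^1$ convergence, transferring the martingale property to $N$ itself.

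The main obstacle lies in two places. First, in the termwise handling of the series for $\JJ$ when $\Phi(p)>\alpha$, where the $\aa_k$ may change sign; this is resolved cleanly by working on $[0,\tau_c^+]$ where $X$ is bounded and the series converges absolutely and uniformly. Second, the $d\to\infty$ passage in (i), where no a priori bound on $|\JJ|$ applies to the unstopped $N$; here the explicit exponential distribution of $\log\overline{Y}_\infty-\log y$ (recalled in the preliminaries) together with the entire-function estimates for $\JJ$ furnish the needed dominated-convergence input.
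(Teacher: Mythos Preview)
Your It\^o/compensation argument for \ref{prelim:ii} is correct and is a genuinely different route from the paper's. The paper proves \ref{prelim:i} first, by establishing the recursive moment identity $\QQ_y[Y_s^{\alpha n}]=y^{\alpha n}+(\psi(\alpha n)-p)\int_0^s\QQ_y[Y_v^{\alpha(n-1)};v<T_0]\,dv$ and from it the bound $\QQ_y[Y_s^{\alpha n}]\leq \sum_{k=0}^n\frac{|\aa_{n-k}|}{|\aa_n|}y^{\alpha(n-k)}\frac{s^k}{k!}$, which sums to give $\sup_{v}\QQ_y[e^{-\beta v}|\JJ(\beta Y_v^\alpha)|]<\infty$; it then deduces \ref{prelim:ii} from \ref{prelim:i} by optional sampling (the easy direction, since $M^{\tau_c^+}$ is bounded). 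Your direct semimartingale computation for \ref{prelim:ii} is cleaner in that it bypasses the moment recursion entirely, and the telescoping via $(\psi(k\alpha)-p)\aa_k=\aa_{k-1}$ together with the compensator of the killing jump is exactly right.

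The gap is in your passage from \ref{prelim:ii} to \ref{prelim:i}. The uniform-integrability input you invoke---``the exponential law of $\log\overline{Y}_\infty-\log y$ together with the growth of the entire function $\JJ$''---does not do the job. The law of $\overline{Y}_\infty$ is Pareto with index $\Phi(p)$, so $\QQ_y[\overline{Y}_\infty^r]<\infty$ only for $r<\Phi(p)$; meanwhile $|\JJ(u)|\leq \sum_k|\aa_k|u^k$ grows at least like $e^{c\sqrt{u}}$ (and like $e^{cu}$ in the finite-variation case, since then $\psi(l\alpha)-p\sim \drift\alpha l$ and $|\aa_k|\sim C^k/k!$). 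Hence $\sup_d|N_{s\wedge T_d^+}|$, dominated by $\sum_k|\aa_k|(\beta\overline{Y}_s^\alpha)^k\leq \sum_k|\aa_k|(\beta\overline{Y}_\infty^\alpha)^k$, is \emph{not} integrable, and your $d\to\infty$ step fails as written. Replacing $\overline{Y}_\infty$ by $\overline{Y}_s$ does not help without an explicit moment bound on $\overline{Y}_s$ of the right growth in the exponent---which is precisely the content of the paper's moment computation. In short, your stopping argument shows $N$ is a \emph{local} $\GG$-martingale, but upgrading to a true martingale still requires the integrability estimate the paper proves; the tail of $\overline{Y}_\infty$ is far too heavy to supply it.
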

The proof of Proposition~\ref{proposition:mtgs-exit}, and also of Corollary~\ref{corollary} below, follows on p.~\pageref{proof}. 
\begin{remark}\label{remark:other-mtgs}
One has the following parallel statements. For $\gamma\in [0,\infty)$:
\begin{enumerate}[(I)]
\item\label{I} for each $d\in (0,\infty)$ and $y\in (0,d]$, under $\QQ_y$, the process $(e^{-\gamma s}Y_s^{\Phi(p)}\II(\gamma Y_s^\alpha))_{s\in [0,\infty)}$ stopped at $T_d^+$ is a martingale in $\GG$ with terminal value $d^{\Phi(p)}\II(\gamma d^\alpha)e^{-\gamma T_d^+}\mathbbm{1}_{\{T_d^+<T_0\}}$;
\item\label{II} for each $a\in \mathbb{R}$ and  $x\in \mathbb{R}$, under $\PP_x$, the process $(e^{-\gamma I_t}e^{\Phi(p)X_t}\II(\gamma e^{\alpha X_t})\mathbbm{1}_{\{t<\expp\}})_{t\in [0,\infty)}$ stopped at $\tau_a^+$ is a martingale in $\FF$ with terminal value $e^{\Phi(p)a}\II(\gamma e^{\alpha a})e^{-\gamma I_{\tau_a^+}}\mathbbm{1}_{\{\tau_a^+<\expp\}}$.
\end{enumerate}
Indeed \ref{I} is a direct consequence of \eqref{exit} and the Markov property of $Y$ in $\GG$, whereas it is perhaps easiest to get \ref{II} by optional sampling on the martingale from \ref{I} via the time change $I=(I_t)_{t\in [0,\infty)}$ (in an analogous manner as we will see in the first paragraph of the proof of Proposition~\ref{proposition:mtgs-exit}).
\end{remark}
 \begin{remark}
In the case $p=0=\Phi(0)$ ($T_0=\infty$ a.s.), the martingale claim on $N$ can be found in  \cite[Theorem~13.9]{kyprianou}. The case $\Phi(0)>0=p$ is inspired by \cite[Eq. (2.4)]{pierre}. Indeed the latter result implies that, if further Rivero's condition \cite[Theorem~2]{rivero} $\Phi(0)\in (0,\alpha)$ (which guarantees $Y$ admits a self-similar recurrent extension that leaves $0$ continuously) is met, then for some $b\in (0,\infty)$, the process $M-bO$, where $O:=(e^{\Phi(0) X_t}\mathcal{I}^{0,\alpha}(\beta e^{\alpha X_t}))_{t\in [0,\infty)}$, is a (even bounded nonnegative) martingale in $\FF$. Combined with \ref{II} of Remark~\ref{remark:other-mtgs}, \ref{prelim:ii}  follows (apply optional stopping and the fact that martingales form a linear space). The general case can be handled by the same techniques as are those that are used in the proof of  \cite[Theorem~13.9]{kyprianou},  but the details are quite delicate, so we provide an explicit proof below.  More generally, Proposition~\ref{proposition:mtgs-exit} is related to the computation of the positive entire moments of (spectrally negative) pssMp, for which see \cite{bertoin-yor} and more recently \cite{doring-barczy}. We mention also the paper \cite{patie} that provides some analytical representations of the density of the absorption time $T_0$. 
\end{remark}

%\begin{remark}
% By optional stopping, because $\expp$ is independent of $\FF_\infty$, it is also true that, ceteris paribus dropping the assumption $p=0$, the processes $M^{\expp\land\tau_c^+}$, $c\in \mathbb{R}$, and $N\mathbbm{1}_{\llbracket 0,T_0\rrparenthesis}+N_{T_0-}\mathbbm{1}_{\llbracket T_0,\infty\rrparenthesis }$ are martingales in $\FF$ and $\GG$, respectively.
%\end{remark}

\begin{corollary}\label{corollary}
For real $x\leq c$ and $\beta\in [0,\infty)$,
\begin{equation}\label{eq:exit-conditioned}
\PP_x\left[e^{-\beta I_\expp}; \tau_c^+\geq \expp\right]=\mathcal{M}^{p,\alpha}_\beta(e^x,e^c).
\end{equation}
\end{corollary}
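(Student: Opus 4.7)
I would prove the identity first under the non-resonant hypothesis $\Phi(p)\notin \alpha\mathbb{N}$ by optional stopping on the bounded martingale of Proposition~\ref{proposition:mtgs-exit}\ref{prelim:ii}, and then cover the resonant case $\Phi(p)\in\alpha\mathbb{N}$ via a continuity-in-$\alpha$ passage to the limit.

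In the non-resonant case, since the power series defining $\JJ$ has infinite radius of convergence, $\JJ$ is bounded on compact discs; combined with $X_t\leq c$ on $\{t<\tau_c^+\}$ (as $X$ has no positive jumps), this shows that $M^{\tau_c^+}$ is a \emph{uniformly bounded} $\PP_x$-martingale. Noting $\JJ(0)=\aa_0=1$, I would identify the pointwise limit as $t\to\infty$ as
$$M_{t\wedge\tau_c^+}\longrightarrow e^{-\beta I_{\tau_c^+}}\JJ(\beta e^{\alpha c})\mathbbm{1}_{\{\tau_c^+<\expp\}}+e^{-\beta I_\expp}\mathbbm{1}_{\{\tau_c^+\geq\expp\}},$$
where on $\{\tau_c^+<\expp\}$ one uses $X_{\tau_c^+}=c$ (no positive jumps) and on $\{\tau_c^+\geq\expp\}$ one uses that $\mathbbm{1}_{\{t<\expp\}}$ eventually vanishes. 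Optional stopping and bounded convergence then yield
$$\JJ(\beta e^{\alpha x})=M_0=\JJ(\beta e^{\alpha c})\,\PP_x[e^{-\beta I_{\tau_c^+}};\tau_c^+<\expp]+\PP_x[e^{-\beta I_\expp};\tau_c^+\geq\expp],$$
and substituting \eqref{exit-1} for the first summand and rearranging gives $\PP_x[e^{-\beta I_\expp};\tau_c^+\geq\expp]=\MM(e^x,e^c)$.

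For the resonant case $\Phi(p)=m\alpha$ ($m\in\mathbb{N}$), I would take $\alpha'$ in a small punctured neighbourhood of $\alpha$ (so $\Phi(p)\notin\alpha'\mathbb{N}$) and apply the identity just established with $\alpha'$ in place of $\alpha$. The resulting left-hand side $\PP_x[e^{-\beta \int_0^\expp e^{\alpha' X_u}du};\tau_c^+\geq\expp]$ is continuous in $\alpha'$ at $\alpha$ by dominated convergence: on $\{\tau_c^+\geq\expp\}$ one has $X_u\leq c$ for $u\leq \expp$, and for $\alpha'$ ranging over a bounded neighbourhood of $\alpha$ the integrand $e^{\alpha' X_u}\mathbbm{1}_{\{u<\expp\}}$ is uniformly dominated by an integrable function (the case $p=0$ requires the strong law $X_u/u\to\psi'(0+)<0$ to secure exponential decay at infinity). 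Hence $\lim_{\alpha'\to\alpha}\mathcal{M}^{p,\alpha'}_\beta(e^x,e^c)$ exists, both vindicating the a priori claim of Definition~\ref{definition:MM} and matching $\PP_x[e^{-\beta I_\expp};\tau_c^+\geq\expp]$. The principal obstacle is the uniform boundedness of $M^{\tau_c^+}$, as the $\aa_k$ are only \emph{ultimately} of one sign, so the argument depends critically on $\JJ$ being entire; the continuity step is thereafter routine.
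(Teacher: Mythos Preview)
Your proof is correct and follows essentially the same approach as the paper: optional sampling on the bounded martingale $M^{\tau_c^+}$ in the non-resonant case, identifying the terminal value and invoking \eqref{exit-1}, then passing to the limit in $\alpha$ for the resonant case. The paper's version is terser (it simply states ``the case $\Phi(p)\in\alpha\mathbb{N}$ is got by taking limits''), whereas you spell out the dominated/bounded convergence justifications; your treatment of the $p=0$ case via the strong law is a reasonable way to fill in that detail, though in fact the outer expectation is bounded by $1$, so bounded convergence there reduces matters to a.s.\ convergence of $I_\expp$ in $\alpha'$, for which your inner domination argument suffices.
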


\begin{remark}
Let, provisionally, $f(\beta):=\PP[e^{-\beta I_\expp}]$ for $\beta\in [0,\infty)$. Assume $\Phi(p)\notin \alpha\mathbb{N}$. Then, by \eqref{eq:exit-conditioned} and \eqref{exit-1}, for all real $x\leq c$, $f(\beta e^{\alpha x})=\PP_x[e^{-\beta I_{\tau_c^+}};\tau_c^+<\expp]\PP_c[e^{-\beta I_\expp}]+\PP_x[e^{-\beta I_\expp};\tau_c^+\geq \expp]=\frac{e^{\Phi(p)x}\II(\beta e^{\alpha x})}{e^{\Phi(p)c}\II(\beta e^{\alpha c})}f(\beta e^{\alpha c})+\JJ(\beta e^{\alpha x})-\frac{e^{\Phi(p)x}\II(\beta e^{\alpha x})}{e^{\Phi(p)c}\II(\beta e^{\alpha c})}\JJ(\beta e^{\alpha c})$, i.e. $\frac{f(\beta e^{\alpha x})-\JJ(\beta e^{\alpha x})}{(\beta e^{\alpha x})^{\frac{\Phi(p)}{\alpha}}\II(\beta e^{\alpha x})}=\frac{f(\beta e^{\alpha c})-\JJ(\beta e^{\alpha c})}{(\beta e^{\alpha c})^{\frac{\Phi(p)}{\alpha}}\II(\beta e^{\alpha c})}$. It follows that for some $b_{p,\alpha}\in \mathbb{R}$ (the $b_{p,\alpha}$ of course also depends on the characteristic of $X$; we make explicit only the dependence on $\alpha$ and $p$), and then all $x\in \mathbb{R}$, $\beta \in [0,\infty)$, one has
\begin{equation}\label{eq:extension}
\PP_x[e^{-\beta I_\expp}]=f(\beta e^{\alpha x})=\JJ(\beta e^{\alpha x})-b_{p,\alpha} (\beta e^{\alpha x})^{\frac{\Phi(p)}{\alpha}}\II(\beta e^{\alpha x}).
\end{equation} Since by bounded convergence, for $\beta>0$, $\lim_{x\to\infty}\PP_x[e^{-\beta I_\expp}]=0$, one can identify $C_{p,\alpha}$ as the unique real number, necessarily not zero\footnote{Indeed strictly positive or strictly negative according as the coefficients $\aa_n$ are ultimately all strictly positive or strictly negative.}, for which $\lim_{y\to\infty}(\JJ(y)- C_{p,\alpha} y^{\frac{\Phi(p)}{\alpha}}\II(y))=0$. Again when $\Phi(p)=\alpha m$ for some $m\in \mathbb{N}$, then \eqref{eq:extension} still holds, provided the right-hand side is understood in the limiting sense as $\alpha\to \Phi(p)/m$. This generalizes the result of  \cite[Eq. (2.4)]{pierre} to the case when $p>0$ or else $\Phi(0)\in [\alpha,\infty)$. 
\end{remark}
\begin{proof}[Proofs of Proposition~\ref{proposition:mtgs-exit} and Corollary~\ref{corollary}]\label{proof}
%We may assume $\expp=\infty$ with certainty (not just a.s.). Recall $\GG=\FF_\phi$, where $\phi=(\phi_s)_{s\in [0,\infty)}$ is the right-continuous inverse of $I=(I_t)_{t\in [0,\infty)}$, and that (in our present case) $T_0=I_\infty$. 
Suppose \ref{prelim:i} has been established. For each $c\in \mathbb{R}$, $M^{\tau_c^+}$ is $N^{T_{e^c}^+}$, time changed by $I=(I_t)_{t\in [0,\infty)}$. By optional stopping $N^{T_{e^c}^+}$ is an a.s. bounded martingale in $\GG$. $I$ is a family of finite $\GG$-stopping times. Thus, by optional sampling on $N^{T_{e^c}^+}$, the martingale property of $M^{\tau_c^+}$ follows (note that $\FF_t\subset \GG_{I_t}$ for all $t\in [0,\infty)$). Assuming $\Phi(p)\notin \alpha\mathbb{N}$, Corollary~\ref{corollary} then obtains yet again by optional sampling, this time on the martingale $M^{\tau_c^+}$: for real $x\leq c$, $\beta\in[0,\infty)$, one has $\PP_x\left[e^{-\beta I_{\tau_c^+}}\JJ(\beta e^{\alpha c});\tau_c^+<\expp\right]+\PP_x\left[e^{-\beta I_\expp}; \tau_c^+\geq \expp\right]=\JJ(\beta e^{\alpha x})$, followed by an application of \eqref{exit-1}. The case $\Phi(p)\in \alpha\mathbb{N}$ is got by taking limits.

So it remains to argue \ref{prelim:i}. Let $s\in[0,\infty)$, $y\in (0,\infty)$ and $n\in \mathbb{N}$.  

Recall first from \eqref{exit} that for $\gamma\in (0,\infty)$ and $d\in [y,\infty)$, $\QQ_y[e^{-\gamma T_d^+};T_d^+<\infty]=\left(\frac{y}{d}\right)^{\Phi(p)}\frac{\II(\gamma y^\alpha)}{\II(\gamma d^\alpha)}$. But $\QQ_y[e^{-\gamma T_d^+};T_d^+<\infty]=\int_0^\infty \gamma e^{-\gamma s}\QQ_y(\overline{Y}_s>d)ds$. %\left(\frac{y}{d}\right)^{\Phi(0)}\frac{\II_0(\gamma y^\alpha)}{\II_0(\gamma d^\alpha)},$$ and 
Hence, for $k\in (0,\infty)$, using $\QQ_y[\overline{Y}_s^k]=k\int_0^\infty m^{k-1}\QQ_y(\overline{Y}_s>m)dm$ and Tonelli's theorem,
\begin{equation}\label{eq:app:moments}
\int_0^\infty \gamma e^{-\gamma s}\QQ_y[\overline{Y}_s^k]ds=y^k+k\int_y^\infty m^{k-1}\left(\frac{y}{m}\right)^{\Phi(p)}\frac{\II(\gamma y^\alpha)}{\II(\gamma m^\alpha)}dm<\infty
\end{equation} (the finiteness is clear from the definition of $\II$). We conclude that $\QQ_y[\overline{Y}_s^n]<\infty$.\footnote{Incidentally, formula \eqref{eq:app:moments} gives the positive  moments of $\overline{Y}$ sampled at an independent exponential random time of rate $\gamma$ (it is even trivially valid for $k=0$).}

We have next, for $t\in [0,\infty)$, that $\QQ_y[Y_{s\land I_t}^{\alpha n}]=\PP_{\log y}[e^{\alpha nX_{\phi_s\land t}};\phi_s\land t<\expp]=\QQ_y[e^{\alpha nX_{\phi_s\land t}-p(\phi_s\land t)}]$, because $\expp$ is independent of $X$; note that $\phi_{s\land   I_t}=\phi_s\land  t$. 
Furthermore, because $\phi$ is the right inverse of $I$, which is, by the fundamental theorem of calculus, differentiable from the right and differentiable at every continuity point of $X$, 
$$\frac{d^+\phi_v}{dv}=e^{-\alpha X_{\phi_v}},\quad v\in [0,I_\infty),$$ where $d^+$ signifies that the right-derivative is meant, with the understanding that it can be replaced by the ordinary derivative at every $v\in [0,I_\infty)$ for which $\phi_v$ is a continuity point of $X$ (and hence for Lebesgue-almost every (indeed all except countably many) $v\in [0,I_\infty)$). Then, with analogous provisos, $$\frac{d^+}{dv}e^{(\psi(\alpha n)-p)\phi_v}=e^{(\psi(\alpha n)-p)\phi_v}(\psi(\alpha n)-p)e^{-\alpha X_{\phi_v}},\quad v\in [0,I_\infty),$$ so that 
$$e^{(\psi(\alpha n)-p)\phi_v}=1+(\psi(\alpha n)-p)\int_0^ve^{(\psi(\alpha n)-p)\phi_w-\alpha X_{\phi_w}}dw,\quad v\in [0,I_\infty)$$
(the function $[0,\infty)\ni u\mapsto X_u$ is locally bounded away from $-\infty$, hence $[0,I_\infty)\ni v\mapsto \phi_v$ is locally Lipschitz and thus absolutely continuous; accordingly the fundamental theorem of calculus applies). Then $$e^{(\psi(\alpha n)-p)\phi_{s\land I_t}}=1+(\psi(\alpha n)-p)\int_0^se^{(\psi(\alpha n)-p)\phi_v-\alpha X_{\phi_v}}\mathbbm{1}_{\{\phi_v<t\}}dv.$$
Now multiply both sides by $e^{\alpha n (X_{\phi_{s\land I_t}}-\log y)-\psi(\alpha n)\phi_{s\land I_t}}$ and take the $\QQ_y$-expectation. By Tonelli's theorem, by optional sampling on the exponential martingale $\mathcal{E}^{\alpha n}$ \eqref{eq:exp-mtgs} at the bounded $\FF$-stopping times $\phi_s\land t$ and $\phi_v\land t$, and by the independence of $X$ from $\expp$, it follows that 
 $$\QQ_y[Y_{s\land I_t}^{\alpha n}]=y^{\alpha n}+(\psi(\alpha n)-p)\int_0^s\QQ_y[e^{\alpha(n-1)X_{\phi_{v}}-p\phi_v};v<I_t]dv$$
$$=y^{\alpha n}+(\psi(\alpha n)-p)\int_0^s\QQ_y[Y_{v}^{\alpha (n-1)};v<I_t,\phi_v<\expp]dv.$$
Now let $t\to \infty$. By dominated convergence on the left-hand side and monotone convergence on the right-hand side, and because $Y$ is constant on $[I_\expp,\infty)\supset [I_\infty,\infty)$ and a.s continuous at $I_\infty$ when $p=0$, we obtain
 
\begin{equation}\label{preliminaries:1}
\QQ_y[Y_{s}^{\alpha n}]=y^{\alpha n}+(\psi(\alpha n)-p)\int_0^s\QQ_y[Y_{v}^{\alpha (n-1)};v<T_0]dv.
\end{equation}

It is now proved by induction that $\QQ_y[Y_s^{\alpha n}]\leq \sum_{k=0}^n\frac{\vert \aa_{n-k}\vert }{\vert \aa_n\vert }y^{\alpha(n-k)}\frac{s^k}{k!}$ (with equality when $T_0=\infty$ a.s., i.e. $\Phi(0)= 0=p$, in which case all the $\aa_l$ are positive). Then
$$\QQ_y[e^{-\beta s}\vert \JJ(\beta Y_s^\alpha)\vert ]\leq e^{-\beta s}\sum_{n=0}^\infty \vert \aa_n\vert \beta^n\QQ_y[Y_s^{\alpha n}]$$
(with equality when $\Phi(0)=0=p$, by Tonelli's theorem)
$$\leq e^{-\beta s}\sum_{n=0}^\infty \vert \aa_n\vert \beta^n\sum_{k=0}^n\frac{\vert \aa_{n-k}\vert }{\vert \aa_n\vert}y^{\alpha(n-k)}\frac{s^k}{k!}=e^{-\beta s}\sum_{k=0}^\infty\frac{(\beta s)^k}{k!}\sum_{n=k}^\infty\vert \aa_{n-k}\vert  (\beta y^\alpha)^{n-k}= \sum_{n=0}^\infty\vert \aa_{n}\vert  (\beta y^\alpha)^{n}<\infty $$
(with equality, and $=\JJ(\beta y^\alpha)$ when $\Phi(0)=0=p$).
%$$.$$
We conclude that 
\begin{equation}\label{eq:loc-bdd}
\sup_{v\in [0,\infty)}\QQ_y[e^{-\beta v}\vert \JJ(\beta Y_v^\alpha)\vert ]<\infty
\end{equation}
 (and $\QQ_y[e^{-\beta s} \JJ(\beta Y_s^\alpha)]=\JJ(\beta y^\alpha)$ when $\Phi(0)=0=p$). In the case when $\Phi(0)=0=p$ ($T_0=\infty$ a.s.) it is now already  standard to argue that $N$ is a martingale in $\GG$, but to handle the general scenario we have to do a little more work. 

Specifically, we show that 
\begin{equation}\label{preliminaries:2}
\QQ_y[e^{-\beta(s\land T_0)}\JJ(\beta Y_{s}^\alpha)]=\JJ(\beta y^\alpha).
\end{equation}
By self-similarity we may assume $y=1$. We then have that
$$f(s):=\QQ[e^{-\beta(s\land T_0)}\JJ(\beta Y_{s}^\alpha)]=\QQ[e^{-\beta T_0};T_0\leq s]+ e^{-\beta s}\sum_{n=0}^\infty \aa_n\beta^n\QQ[Y_s^{\alpha n};s<T_0]$$
(by linearity and Tonelli's theorem, recalling all the $\aa_n$ are ultimately of the same sign)
 $$=\QQ[e^{-\beta (s\land T_0)}]+e^{-\beta s}\sum_{n=1}^\infty \aa_n\beta^n\QQ[Y_s^{\alpha n}],$$
which by \eqref{preliminaries:1} 
$$= \QQ[e^{-\beta (s\land T_0)}]+e^{-\beta s}\sum_{n=1}^\infty \aa_n\beta^n\left(1+(\psi(\alpha n)-p)\int_0^s\QQ_y[Y_{v}^{\alpha (n-1)};v<T_0]dv\right)$$
$$=\QQ[e^{-\beta (s\land T_0)}]+e^{-\beta s}(\JJ(\beta )-1)+e^{-\beta s}\beta \int_0^s\sum_{n=0}^\infty \aa_n\beta^n\QQ[Y_v^{\alpha n};v<T_0]dv$$
(again the interchange of the integral and summation is justified by the fact that all the $\aa_l$ are ultimately of the same sign)
$$=\QQ[e^{-\beta (s\land T_0)}]+e^{-\beta s}(\JJ(\beta )-1)+e^{-\beta s}\beta\int_0^se^{\beta v}(f(v)-\QQ[e^{-\beta T_0};T_0\leq v])dv$$ 
$$=\QQ[e^{-\beta (s\land T_0)}]+e^{-\beta s}(\JJ(\beta )-1)+e^{-\beta s}\beta\int_0^se^{\beta v}f(v)dv-\QQ[e^{-\beta T_0}-e^{-\beta s};s\geq T_0]$$ 
$$=\JJ(\beta)e^{-\beta s}+\beta e^{-\beta s}\int_0^se^{\beta v}f(v)dv.$$
We now have the integral equation $f(s)=\JJ(\beta)e^{-\beta s}+\beta e^{-\beta s}\int_0^se^{\beta v}f(v)dv$ for $f:[0,\infty)\to[0,\infty)$. But $f$ is locally bounded because of \eqref{eq:loc-bdd}, hence continuous by bounded convergence, hence continuously differentiable by the fundamental theorem of calculus. Differentiating we obtain $f'=0$ and thus $f(s)=f(0)=\JJ(\beta)$, as was to be shown.

With \eqref{preliminaries:2} having been established, showing that $N$ is a martingale in $\GG$, is an exercise in applying the Markov property of $Y$ on $[0,T_0)$: for $\{s_1,s_2\}\subset [0,\infty)$, %\footnotesize
$$\QQ_y[e^{-\beta((s_1+s_2)\land T_0)}\JJ(\beta Y_{s_1+s_2}^\alpha)\vert \GG_{s_1}]$$ %e^{-\beta(s_1+s_2)}\QQ_{y}[\JJ(\beta Y_{s_1+s_2})\mathbbm{1}_{\{s_1+s_2<T_0\}}\vert\GG_{s_1}]+\QQ_y[e^{-\beta T_0}\mathbbm{1}_{\{s_1+s_2\geq T_0\}}\vert\GG_{s_1 }]$$
%$$=\mathbbm{1}_{\{s_1<T_0\}}\QQ_{Y_{s_1}}[\JJ(\beta Y_{s_2});s_2<T_0]e^{-\beta(s_1+s_2)}+\mathbbm{1}_{\{s_1<T_0\}}e^{-\beta s_1}\QQ_{Y_{s_1}}\left[e^{-\beta T_0};s_2\geq T_0\right]+\mathbbm{1}_{\{s_1\geq T_0\}}e^{-\beta T_0}$$
$$=\mathbbm{1}_{\{s_1<T_0\}}e^{-\beta s_1}\QQ_{Y_{s_1}}[e^{-\beta (s_2\land T_0)}\JJ(\beta Y_{s_2}^\alpha)]+\mathbbm{1}_{\{s_1\geq T_0\}}e^{-\beta T_0}=e^{-\beta(s_1\land T_0)}\JJ(\beta Y_{s_1}^\alpha),$$
%\normalsize
a.s.-$\QQ_y$, where we used \eqref{preliminaries:2} in the last equality.
\end{proof}

\section{Laws of quantities at the maximum}\label{section:main}

\subsection{Laplace transform of $L$ given $\overline{Y}_\infty$}

\begin{proposition}\label{proposition:laplace-transforms}
Let $\gamma\in [0,\infty)$. For $x\in \mathbb{R}$, $$\PP_x\left[\exp\left\{-\gamma\int_0^Ge^{\alpha X_u}du\right\}\Big\vert \overline{X}_\expp\right]=\frac{\II(\gamma e^{\alpha x})}{\II(\gamma e^{\alpha \overline{X}_\expp})}$$ a.s.-$\PP_x$.; in other words, for $y\in (0,\infty)$, a.s.-$\QQ_y$,
$$\QQ_y\left[e^{-\gamma L}\vert \overline{Y}_\infty\right]=\frac{\II(\gamma y^\alpha)}{\II(\gamma {\overline{Y}_\infty}^\alpha)}.$$ 
\end{proposition}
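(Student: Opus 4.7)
My plan is to compute the joint Laplace transform $\PP_0\bigl[F(\overline{X}_\expp) e^{-\gamma I_G}\bigr]$ for arbitrary bounded measurable $F$ via excursion theory and then extract the conditional expectation by dividing by the $\Exp(\Phi(p))$-density of $\overline{X}_\expp$ under $\PP_0$. By spatial homogeneity of $X$ it suffices to treat the case $x = 0$; the general $x$ (and equivalently the $\QQ_y$-statement) then follows by translation and the Lamperti correspondence.

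Under $\PP_0$ the excursions $(\epsilon_b)_{b \in \dd}$ of $X$ from its running supremum form a Poisson point process of intensity $db \otimes \nn(dw)$ parametrized by the local time $\overline{X}$, and with $L^* := \overline{X}_\expp$, $j(w) := \int_0^{\zeta(w)} e^{\alpha w(u)} du$ and $D(a) := (e^{\alpha a} - 1)/(\alpha \drift)$ (read as $0$ in the infinite-variation case, using the convention $1/\drift = 0$), one has the pathwise decomposition $I_G = D(L^*) + \sum_{b \in \dd,\, b < L^*} e^{\alpha b}\, j(\epsilon_b)$, with an analogous formula for $I_{\tau_a^+}$ obtained by truncating the sum at $b \leq a$ and replacing $L^*$ by $a$. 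Applying the Slivnyak/Palm formula, combined with the independence of $\expp$ and a separation according to whether $\expp$ falls inside the excursion of height $L^*$ (contributing $\nn(1 - e^{-p\zeta})$ via Palm) or in the drift portion of $\tau^+$ (contributing $p/\drift$, which vanishes in the infinite-variation case), and using the subordinator identity $\Phi(p) = p/\drift + \nn(1-e^{-p\zeta})$, yields
\[
\PP_0[F(L^*) e^{-\gamma I_G}] = \Phi(p) \int_0^\infty F(a)\, e^{-\gamma D(a) - pa/\drift}\, \exp\!\Bigl(-\int_0^a \nn\bigl(1 - e^{-\gamma e^{\alpha b} j - p\zeta}\bigr) db\Bigr) da.
\]

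The key observation is that the same PPP computation applied to $I_{\tau_a^+}$ (now without any Palm factor, since we are simply stopping at local time $a$) gives
\[
\PP_0\bigl[e^{-\gamma I_{\tau_a^+}};\, \tau_a^+ < \expp\bigr] = e^{-\gamma D(a) - pa/\drift}\, \exp\!\Bigl(-\int_0^a \nn\bigl(1 - e^{-\gamma e^{\alpha b} j - p\zeta}\bigr) db\Bigr),
\]
which by Patie's formula \eqref{exit-1} equals $\II(\gamma)/(e^{\Phi(p) a} \II(\gamma e^{\alpha a}))$. Substituting back, the drift prefactors cancel and one arrives at
\[
\PP_0[F(L^*) e^{-\gamma I_G}] = \int_0^\infty F(a)\, \frac{\II(\gamma)}{\II(\gamma e^{\alpha a})}\, \Phi(p) e^{-\Phi(p) a}\, da,
\]
whence $\PP_0[e^{-\gamma I_G} \mid \overline{X}_\expp = a] = \II(\gamma)/\II(\gamma e^{\alpha a})$; a spatial shift by $x$ then delivers the stated conditional Laplace transform in full generality.

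The main obstacle I foresee is the bookkeeping in the finite-variation case, where $\expp$ carries positive $\PP_0$-mass on $\{\overline{X}_\expp = X_\expp\}$ per \eqref{eq:at-sup} (equivalently, $\expp$ falls within the drift portion of the subordinator $\tau^+$), forcing a two-case split of the Palm calculation; the two contributions reassemble cleanly via the subordinator identity for $\Phi(p)$, and the surviving drift prefactors then cancel against those produced by Patie's formula \eqref{exit-1}, which is precisely what makes the final identification so neat.
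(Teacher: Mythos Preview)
Your approach is correct and follows essentially the same excursion-theoretic route as the paper: both compute $\PP[F(\overline{X}_\expp)e^{-\gamma I_G}]$ by splitting on whether $\expp$ falls in an excursion or (in the finite-variation case) in the drift part of $\tau^+$, reduce to the identity $\Phi(p)\int_0^\infty F(a)\,\PP[e^{-\gamma I_{\tau_a^+}};\tau_a^+<\expp]\,da$, and close via Patie's formula \eqref{exit-1}. The bookkeeping differs slightly: the paper handles the drift term by invoking the Wiener--Hopf independence of $\{\overline{X}_\expp=X_\expp\}$ from the pre-$G$ process (so that contribution becomes $\AA\cdot p/(\Phi(p)\drift)$ and is moved to the left-hand side), and then \emph{derives} the subordinator identity $\nn(1-e^{-p\zeta}\mathbbm{1}_{\{\zeta<\infty\}})=\Phi(p)-p/\drift$ by setting $\gamma=0$, $F=1$; you instead \emph{assume} that identity and compute the drift term directly via the local-time change of variables. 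Your explicit decomposition of $I_{\tau_a^+}$ into $D(a)$ plus excursion integrals is correct but not actually needed, since the compensation formula already delivers $\PP[e^{-\gamma I_{\tau_a^+}};\tau_a^+<\expp]$ as the integrand without unpacking it.

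One small gap: your displayed formulas should carry $\mathbbm{1}_{\{\zeta<\infty\}}$ multiplicatively on $e^{-p\zeta}$. When $p=0$ and $X$ drifts to $-\infty$ one has $\nn(\zeta=\infty)=\Phi(0)>0$ while $j=\int_0^\zeta e^{\alpha\xi_u}du$ can remain finite on such excursions, so the omission invalidates both your subordinator identity (it would give $\Phi(0)=0$) and your PPP expression for $\PP_0[e^{-\gamma I_{\tau_a^+}};\tau_a^+<\infty]$ in that regime.
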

\begin{remark}\label{remark:law-of-zeta}
In the proof we will, en passant,  establish the identity $\nn(1-e^{-p\zeta}\mathbbm{1}_{\{\zeta<\infty\}})=\Phi(p)-\frac{p}{\drift}$ (recall we interpret $\drift=\infty$, hence $\frac{p}{\drift}=0$, when $X$ has paths of infinite variation).
\end{remark}
\begin{proof}
Without loss of generality we work under $\PP$: for $x\in \mathbb{R}$, the law of $(L,\overline{X}_\expp)$ under $\PP_x$ is that of $(e^{\alpha x}L,x+\overline{X}_\expp)$ under $\PP$.
Then we are to determine, for $f\in \mathcal{B}_\mathbb{R}/\mathcal{B}_{[0,\infty]}$,
$$\AA:=\PP\left[\exp\left\{-\gamma \int_0^G e^{\alpha X_u}du\right\}f(\overline{X}_\expp)\right]=$$\footnotesize
$$\PP \left[\sum_{a\in \dd}\exp\left\{-\gamma \int_0^{\tau_{a-}^+} e^{\alpha X_u}du\right\}f(\overline{X}_{\tau_{a-}^+})\mathbbm{1}_{\{\tau_{a-}^+<\expp\leq \tau_a^+\}}\right]+\PP\left[\exp\left\{-\gamma \int_0^G e^{\alpha X_u}du\right\}f(\overline{X}_G);\overline{X}_\expp=X_\expp\right]. $$\normalsize Here the second term only appears when $p>0$. From the Wiener-Hopf factorization the event $\{\overline{X}_\expp=X_\expp\}$ is independent of the process $X$ on the time interval $[0,G)$, and \eqref{eq:at-sup} $\PP(\overline{X}_\expp=X_\expp)=\frac{p}{\Phi(p) \drift}$. In consequence \footnotesize $$\AA\left(1-\frac{p}{\Phi(p)\drift}\right)=
\PP \left[\sum_{a\in \dd}\exp\left\{-p \tau_{a-}^+-\gamma \int_0^{\tau_{a-}^+} e^{\alpha X_u}du\right\}f(\overline{X}_{\tau_{a-}^+})\left(1-e^{-p(\tau_a^+-\tau_{a-}^+)}\mathbbm{1}_{\{\tau_a^+<\infty\}}\right)\right]$$\normalsize
$$=\PP \left[\sum_{a\in \dd}\exp\left\{-p \tau_{a-}^+-\gamma \int_0^{\tau_{a-}^+} e^{\alpha X_u}du\right\}f(\overline{X}_{\tau_{a-}^+})\left(1-e^{-p\zeta(\epsilon_a)}\mathbbm{1}_{\{\zeta(\epsilon_a)<\infty\}}\right)\right].$$
Now by the absence of positive jumps $\overline{X}$ is continuous and so predictable in $\FF$.  The compensation formula for $\epsilon$ thus entails
$$\AA\left(1-\frac{p}{\Phi(p)\drift}\right)=\PP\left[\int_0^{\overline{X}_\infty} e^{-p\tau_{a-}^+-\gamma\int_0^{\tau_{a-}^+}e^{\alpha X_u}du}f(\overline{X}_{\tau_{a-}^+})da\right]\nn(1-e^{-p\zeta}\mathbbm{1}_{\{\zeta<\infty\}})$$
$$=\nn(1-e^{-p\zeta}\mathbbm{1}_{\{\zeta<\infty\}})\PP\left[\int_0^{\overline{X}_\infty} e^{-p\tau_a^+-\gamma\int_0^{\tau_{a}^+}e^{\alpha X_u}du}f(\overline{X}_{\tau_{a}^+})da\right]$$
$$=\nn(1-e^{-p\zeta}\mathbbm{1}_{\{\zeta<\infty\}})\int_0^{\infty} \PP\left[e^{-\gamma\int_0^{\tau_{a}^+}e^{\alpha X_u}du};\tau_{a}^+<\expp\right]f(a)da,$$
where the penultimate equality follows from the fact that $\tau^+$ has at most countably many jumps, which are then not seen by Lebesgue measure, and the last equality uses the absence of positive jumps. Using \eqref{exit-1} this expresses as 
$$\AA\left(1-\frac{p}{\Phi(p)\drift}\right)=\nn(1-e^{-p\zeta}\mathbbm{1}_{\{\zeta<\infty\}})\int_0^{\infty}\frac{\II(\gamma)}{\II(\gamma e^{\alpha a})}f(a)e^{-\Phi(p)a}da.$$
Taking $\gamma=0$ and $f=1$ and plugging back in concludes the proof, since under $\PP$ the law of $\overline{X}_\expp$ is exponential of rate $\Phi(p)$.
\end{proof}

\subsection{Law of $\jump$}\label{subsection:law-of-jump}

\begin{proposition}\label{proposition:law-of-jump} 
Assume $X$ has paths of finite variation. For $h\in \mathcal{B}_{(-\infty,0]}/\mathcal{B}_{[0,\infty]}$, $$\PP[h(X_G-\overline{X}_\expp)]=\frac{1}{\Phi(p)\drift}\left[ph(0)+\int h(z)(1-e^{\Phi(p)z})\nu(dz)\right];$$ in other words, for  $g\in \mathcal{B}_{(0,1]}/\mathcal{B}_{[0,\infty]}$, 
\begin{equation}\label{eq:law-of-jump}
\QQ[g(\jump)]=\frac{1}{\Phi(p)\drift}\left[pg(1)+\int g(e^z)(1-e^{\Phi(p)z})\nu(dz)\right].
\end{equation}
\end{proposition}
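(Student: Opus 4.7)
The plan is to compute $\AA:=\PP[h(X_G-\overline{X}_\expp)]$ by splitting according to whether $\expp\in \mm$ (the visit set of $X$ to its running maximum) or $\expp$ lies in some open excursion interval $(\tau_{a-}^+,\tau_a^+)$, $a\in \dd$. Because $\drift>0$ in the finite-variation case, $\mm$ decomposes into closed intervals of positive Lebesgue length, so on $\{\overline{X}_\expp=X_\expp\}$ one reads off $G=\expp$ and hence $X_G-\overline{X}_\expp=0$; by \eqref{eq:at-sup} this event has probability $p/(\Phi(p)\drift)$ and contributes $h(0)p/(\Phi(p)\drift)$ to $\AA$. On the complement $\{\overline{X}_\expp>X_\expp\}$, $\expp$ sits in the open excursion indexed by $a=\overline{X}_\expp\in \dd$, forcing $G=\tau_{a-}^+$ and $X_G-\overline{X}_\expp=X_{\tau_{a-}^+}-a=\epsilon_a(0)$.

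Next, the independence of $\expp$ from $X$ together with its exponential law of rate $p$ will give
$$\PP[h(X_G-\overline{X}_\expp);\overline{X}_\expp>X_\expp]=\PP\!\left[\sum_{a\in \dd}h(\epsilon_a(0))e^{-p\tau_{a-}^+}\left(1-e^{-p\zeta(\epsilon_a)}\mathbbm{1}_{\{\zeta(\epsilon_a)<\infty\}}\right)\right]\!.$$
The summand being an $\FF_{\tau^+}$-predictable function of $a$ multiplying a measurable functional of $\epsilon_a$, the compensation formula for the Ppp $\epsilon$ factors this as
$$\PP\!\left[\int_0^{\overline{X}_\infty}e^{-p\tau_{a-}^+}da\right]\cdot\nn\bigl[h(\xi_0)(1-e^{-p\zeta}\mathbbm{1}_{\{\zeta<\infty\}})\bigr],$$
exactly as in the proof of Proposition~\ref{proposition:laplace-transforms}; by \eqref{eq:classical} the first factor equals $\int_0^\infty e^{-\Phi(p)a}da=1/\Phi(p)$, the sign $-$ in $\tau_{a-}^+$ being dropped under the Lebesgue integral.

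For the $\nn$-integral, I would apply the Markov property of $\nn$ at time $0$ (legitimate since $\nn$ is carried by $\{\zeta>0\}$) together with \eqref{eq:classical} to obtain $\nn[h(\xi_0)e^{-p\zeta}\mathbbm{1}_{\{\zeta<\infty\}}]=\nn[h(\xi_0)\PP_{\xi_0}[e^{-p\tau_0^+};\tau_0^+<\infty]]=\nn[h(\xi_0)e^{\Phi(p)\xi_0}]$, hence $\nn[h(\xi_0)(1-e^{-p\zeta}\mathbbm{1}_{\{\zeta<\infty\}})]=\nn[h(\xi_0)(1-e^{\Phi(p)\xi_0})]$. Invoking the standard finite-variation identification of the $\nn$-law of $\xi_0$ as $\drift^{-1}\nu\vert_{(-\infty,0)}$ (a reflection of $\tau^+$ having drift $1/\drift$: the downward jumps of $X$, of rate $\nu$ per unit of actual time, occur while at the max and thus seed excursions at local-time rate $\nu/\drift$) converts this into $\drift^{-1}\int h(z)(1-e^{\Phi(p)z})\nu(dz)$. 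Assembling the two contributions and translating via $\jump=e^{X_G-\overline{X}_\expp}$ yields \eqref{eq:law-of-jump}; the specialization $h\equiv 1$, combined with $\psi(\Phi(p))=p$ and the finite-variation representation $\psi(\lambda)=\drift\lambda+\int(e^{\lambda y}-1)\nu(dy)$, gives $\int(1-e^{\Phi(p)z})\nu(dz)=\drift\Phi(p)-p$, hence the en passant identity of Remark~\ref{remark:law-of-zeta}.

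The chief delicate ingredients will be the structural fact $X_G-\overline{X}_\expp=\epsilon_{\overline{X}_\expp}(0)$ (respectively $0$) on the two events, which relies on the finite-variation interval structure of $\mm$, and the identification $\nn(\xi_0\in\cdot)=\drift^{-1}\nu$, which is what forces the factor $1/\drift$ in the final formula; the compensation-formula manipulation itself closely mirrors that in the proof of Proposition~\ref{proposition:laplace-transforms} and presents no genuine additional difficulty.
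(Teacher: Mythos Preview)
Your approach is correct and follows the same overall skeleton as the paper (split on $\{\overline X_\expp=X_\expp\}$, compensate over the Ppp $\epsilon$, reduce the $\nn$-integral to $\nn[h(\xi_0)(1-e^{\Phi(p)\xi_0})]$, and then invoke $(\xi_0)_\star\nn=\drift^{-1}\nu$), but there are two points where you and the paper diverge.

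First, for the $\nn$-integral you apply the Markov property of $\nn$ directly at $t=0$ to pass from $1-e^{-p\zeta}\mathbbm{1}_{\{\zeta<\infty\}}$ to $1-e^{\Phi(p)\xi_0}$. This is legitimate in the finite-variation setting (excursions begin with a jump and then run as $X$ until $\tau_0^+$), and the paper even states the Markov property for $t\in[0,\infty)$, but the paper itself does \emph{not} use it at $t=0$: it instead applies Markov at $t>0$, obtaining $\nn[h(\xi_0)(1-e^{\Phi(p)\xi_t});t<\zeta]$, and then lets $t\downarrow 0$, justifying the interchange by dominated convergence with the majorant $1-e^{\Phi(p)\underline\xi_\zeta}$ (whose $\nn$-integrability is extracted from the scale-function representation~\eqref{eq:scale-excursion} and $W(0)>0$). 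Your route is shorter; the paper's is more cautious and has the merit of making explicit why the limit is finite.

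Second, you invoke $(\xi_0)_\star\nn=\drift^{-1}\nu$ as a known fact with a heuristic justification. The paper actually \emph{proves} this identity in its opening paragraph by equating two compensation-formula computations of $\PP\bigl[\sum_{a\in\dd}e^{-q\tau_{a-}^+}f(\epsilon_a(0))\bigr]$: once via the excursion Ppp $\epsilon$ (giving $\Phi(q)^{-1}\nn[f(\xi_0)]$) and once via the jump Ppp $\Delta X$ (giving $\Phi(q)^{-1}\drift^{-1}\nu(f)$, after recognising that excursions are seeded precisely by jumps occurring while $X$ is at its maximum). Since this identification is exactly what forces the factor $1/\drift$, it is worth spelling out rather than citing.
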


\begin{remark}\label{remark:jump}
Of course any $\PP_x$ (resp. $\QQ_y$) may replace $\PP$ (resp. $\QQ$) in the above. In the proof we will see that $\nu/\drift$ is the law of $\xi_0$ under $\nn$ (which is otherwise a known fact \cite{rogers}; we include the (short) argument for completeness).
\end{remark}

\begin{proof}
Let  $f\in \mathcal{B}_\mathbb{R}/\mathcal{B}_{[0,\infty]}$. We have by the compensation formulas for $\epsilon$ and $\Delta X$, for any (arbitrary) $q\in (0,\infty)$, $$q \nn\left[f(\xi_0)\right]\Phi(q)^{-1}=q\PP\left[\sum_{a\in \dd}e^{-q\tau_{a-}^+}f(\epsilon_a(0))\right]=q\PP\left[\sum_{t\in \jj}e^{-q t}\mathbbm{1}_{\{\overline{X}_{t-}=X_{t-}\}}f(\Delta X_t)\right]$$
$$=\PP\left[\int_0^\infty \mathbbm{1}_{\{\overline{X}_{t-}=X_{t-}\}}q  e^{-q t}dt\right]\nu(f)=\PP\left[\int_0^\infty \mathbbm{1}_{\{\overline{X}_{t}=X_{t}\}}q  e^{-q t}dt\right]\nu(f)=\frac{q}{\Phi(q) \drift}\nu(f).$$ 
Let now $h$ be bounded; we compute, similarly as in the proof of Proposition~\ref{proposition:laplace-transforms},
$$\PP[h(X_G-\overline{X}_\expp)]=\PP \left[\sum_{a\in \dd}h(\epsilon_a(0))\mathbbm{1}_{\{\tau_{a-}^+<\expp\leq \tau_a^+\}}\right]+h(0)\PP\left(\overline{X}_\expp=X_\expp\right)$$
$$=\PP\left[\sum_{a\in \dd}e^{-p\tau_{a-}^+}h(\epsilon_a(0))(1-e^{-p\zeta(\epsilon_a)}\mathbbm{1}_{\{\zeta(\epsilon_a)<\infty\}})\right]+h(0)\frac{p}{\Phi(p)\drift}$$
$$=\PP\left[\int_0^{\overline{X}_a}e^{-p\tau_{a-}^+}da\right]\nn \left[h(\xi_0)(1-e^{-p\zeta}\mathbbm{1}_{\{\zeta<\infty\}})\right]+h(0)\frac{p}{\Phi(p)\drift}$$
$$=\Phi(p)^{-1}\lim_{t\downarrow 0}\nn\left[h(\xi_0)(1-e^{-p\zeta}\mathbbm{1}_{\{\zeta<\infty\}});t<\zeta\right]+h(0)\frac{p}{\Phi(p)\drift}$$
$$=\Phi(p)^{-1}\lim_{t\downarrow 0}\nn\left[h(\xi_0)e^{-pt}(1-e^{-p\zeta}\mathbbm{1}_{\{\zeta<\infty\}})\circ \theta_t;t<\zeta\right]+h(0)\frac{p}{\Phi(p)\drift},$$ (by dominated convergence, because $(1-e^{-pt})\mathbbm{1}_{\{t<\zeta\}}\leq 1-e^{-p\zeta}\mathbbm{1}_{\{\zeta<\infty\}}$ and $\nn(1-e^{-p\zeta}\mathbbm{1}_{\{\zeta<\infty\}})<\infty$)
$$=\Phi(p)^{-1}\lim_{t\downarrow 0}\nn\left[h(\xi_0)\PP_{\xi_t}\left(1-e^{-p\tau_0^+}\mathbbm{1}_{\{\tau_0^+<\infty\}})\right);t<\zeta\right]+h(0)\frac{p}{\Phi(p)\drift}$$
(by the Markov property of $\nn$)
$$=\Phi(p)^{-1}\lim_{t\downarrow 0}\nn\left[h(\xi_0)\left(1-e^{\Phi(p)\xi_t}\right);t<\zeta\right]+h(0)\frac{p}{\Phi(p)\drift}=\Phi(p)^{-1}\nn\left[h(\xi_0)\left(1-e^{\Phi(p)\xi_0}\right)\right]+h(0)\frac{p}{\Phi(p)\drift}$$
(by dominated convergence because $\left(1-e^{\Phi(p)\xi_t}\right)\mathbbm{1}_{\{t<\zeta\}}\leq \left(1-e^{\Phi(p)\underline{\xi}_{\zeta}}\right)$ and $\nn\left(1-e^{\Phi(p)\underline{\xi}_{\zeta}}\right)=\int_0^\infty \Phi(p)e^{-\Phi(p)a}\nn\left(-\underline{\xi}_{\zeta}>a\right)da$, which quantity is finite when $X$ has paths of finite variation, as follows from  \eqref{eq:scale-excursion} % instance from the excursion representation of the scale function $W$ of $X$ \cite[Eq.~(8.22)]{kyprianou} nohing that 
and the fact that in this case $W(0)>0$). By the first part of this proof the claims follow.
\end{proof}

\subsection{Laplace transform of $T_0-L$ given $\overline{Y}_\infty$ and $\jump$}
\begin{definition}\label{definition:NN}
We introduce the function \footnotesize
$$
\NN(y):=\JJ(\beta  y^\alpha)+\frac{\alpha}{\Phi(p)}\left[\frac{\JJ(\beta  y^\alpha)}{\II(\beta y^\alpha)}\sum_{k=1}^\infty k\bb_k(\beta  y^\alpha)^k-\sum_{k=1}^\infty k\aa_k(\beta  y^\alpha)^k\right],\quad y\in (0,\infty),\beta\in [0,\infty),
$$\normalsize
where the expression must be understood in the limiting sense (as $\alpha\to \Phi(p)/m$), when $\Phi(p)=\alpha m$ for some $m\in \mathbb{N}$: %it will be seen from Proposition~\ref{proposition:laplace-of-residual-time} that this limit does indeed exist a priori; and 
the limit is seen to exist and identified in Remark~\ref{remark:limit} to follow. %Note also that the series appearing in the above display is absolutely convergent. 
\end{definition}
\begin{remark} \label{remark:limit}
Suppose $\Phi(p)=m\alpha$ for an $m\in \mathbb{N}$. Then we may write, for $\alpha'\in (\frac{\Phi(p)}{m+1},\frac{\Phi(p)}{m-1})\backslash \{\frac{\Phi(p)}{m}\}$, setting  provisionally $\widetilde{\bbb_k}:=(\prod_{l=1}^k(\psi(m\alpha'+l\alpha')-p))^{-1}$ for $k\in \mathbb{N}_0$,
$$\NNN(y)=\sum_{k=0}^{m-1}\aaa_k (\beta y^{\alpha'})^k+\frac{\alpha'}{\Phi(p)}\left[\frac{\sum_{k=0}^{m-1}\aaa_k (\beta y^{\alpha'})^k}{\II(\beta y^{\alpha'})}\sum_{k=1}^\infty k\bbb_k(\beta  y^{\alpha'})^k-\sum_{k=1}^{m-1}k\aaa_k(\beta  y^{\alpha'})^k\right]$$\footnotesize
$$+\frac{\aaa_{m-1}(\beta y^{\alpha'})^m}{\psi(\alpha' m)-p}\left[\sum_{k=0}^\infty \widetilde{\bbb_k}(\beta y^{\alpha'})^k+\frac{\alpha'}{\Phi(p)}\left(\frac{\sum_{k=0}^\infty \widetilde{\bbb_k}(\beta y^{\alpha'})^k}{\II(\beta y^{\alpha'})}\sum_{k=1}^\infty k\bbb_k(\beta  y^{\alpha'})^k-\sum_{k=0}^\infty \widetilde{\bbb_k}(\beta y^{\alpha'})^k(k+m)\right)\right]$$\normalsize
$$\xrightarrow{\alpha'\to \alpha}\sum_{k=0}^{m-1}\aa_k (\beta y^{\alpha})^k+\frac{\alpha}{\Phi(p)}\left[\frac{\sum_{k=0}^{m-1}\aa_k (\beta y^{\alpha})^k}{\II(\beta y^{\alpha})}\sum_{k=1}^\infty k\bb_k(\beta  y^{\alpha})^k-\sum_{k=1}^{m-1}k\aa_k(\beta  y^{\alpha})^k\right]$$
$$+\frac{\aa_{m-1}(\beta y)^{\alpha m}}{\psi'(\Phi(p))\Phi(p)}\left[\alpha\left(1-\frac{\sum_{k=1}^\infty k \bb_k(\beta y^\alpha)^k}{\II(\beta y^\alpha)}\right)\sum_{k=1}^\infty\cc_k(\beta y^\alpha)^k-\II(\beta y^\alpha)\right]=\NN(y)$$
(recall the $\cc_k$ from Remark~\ref{remark:limits:ii}).
\end{remark}

\begin{proposition}\label{proposition:laplace-of-residual-time}
Let $\beta\in [0,\infty)$.
\leavevmode
\begin{enumerate}[(i)]
\item\label{prop:fv} If $X$ has finite variation, then 
\begin{equation}\label{eq:cond-laplace-jump-sup} 
\PP_x\left[ e^{-\beta\int_G^\expp e^{\alpha X_u}du}\vert \overline{X}_\expp,X_G\right]=\frac{\MM( e^{ X_G}, e^{ \overline{X}_\expp})}{1-e^{\Phi(p)(X_G-\overline{X}_\expp)}}\mathbbm{1}_{(-\infty,0)}(X_G-\overline{X}_\expp)+\mathbbm{1}_{\{0\}}(X_G-\overline{X}_\expp)
\end{equation}
 a.s.-$\PP_x$ for all $x\in \mathbb{R}$; in other words 
\begin{equation}\label{eq:cond-laplace-jump-sup-Y} 
\QQ_y\left[ e^{-\beta(T_0-L)}\vert \overline{Y}_\infty,J\right]=\frac{\MM(\overline{Y}_\infty \jump,\overline{Y}_\infty)}{1-\jump^{\Phi(p)}}\mathbbm{1}_{(0,1)}(\jump)+\mathbbm{1}_{\{1\}}\left(\jump\right)
\end{equation}
a.s.-$\QQ_y$ for all $y\in (0,\infty)$.
%\item \label{prop:iv} Suppose $X$ has infinite variation.
%\begin{enumerate}[(a)]
\item\label{prop:iv:diffusion} If $X$ has infinite variation, then 
%\end{enumerate}
\begin{equation}\label{eq:cond-laplace-sup-diffusion} 
\PP_x\left[ e^{-\beta\int_G^\expp e^{\alpha X_u}du}\vert \overline{X}_\expp\right]=\NN(e^{\overline{X}_\expp})
\end{equation}
 a.s.-$\PP_x$ for all $x\in \mathbb{R}$; in other words 
\begin{equation}\label{eq:cond-laplace-jump-sup-Y} 
\QQ_y\left[ e^{-\beta(T_0-L)}\vert \overline{Y}_\infty\right]=\NN(\overline{Y}_\infty)
\end{equation}
a.s.-$\QQ_y$ for all $y\in (0,\infty)$.
\end{enumerate}
\end{proposition}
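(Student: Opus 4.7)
The plan is to exploit the Itô excursion theory of $X$ at its running supremum and combine it with Corollary~\ref{corollary}. I would work under $\PP=\PP_0$ (spatial homogeneity extends to all $\PP_x$), and test the claimed identity against a bounded measurable $h$ of $(\overline{X}_\expp,X_G)$ in case (i), or of $\overline{X}_\expp$ alone in case (ii). Almost surely, either $\expp\in\mm$ -- in which case $G=\expp$, $X_G=\overline{X}_\expp$ and $\int_G^\expp e^{\alpha X_u}du=0$ -- or $\expp$ lies in the unique excursion $\epsilon_a$ of $X$ from its running supremum straddling it, where $a:=\overline{X}_\expp\in \dd$, so that $G=\tau_{a-}^+$, $X_G=a+\xi_0(\epsilon_a)$, $\int_G^\expp e^{\alpha X_u}du=e^{\alpha a}\int_0^{\expp-\tau_{a-}^+}e^{\alpha\epsilon_a(s)}ds$. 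From the first event, Wiener--Hopf independence and \eqref{eq:at-sup} contribute $\frac{p}{\drift}\int_0^\infty h(a,a)e^{-\Phi(p)a}da$. Integrating out $\expp$ inside each excursion produces, on the second event, the weight $e^{-p\tau_{a-}^+}\Psi_a(\epsilon_a)$ with
$$\Psi_a(\omega):=\int_0^{\zeta(\omega)}pe^{-pv}e^{-\beta e^{\alpha a}\int_0^ve^{\alpha \omega(s)}ds}dv;$$
the compensation formula for $\epsilon$ together with $\PP[e^{-p\tau_a^+};\tau_a^+<\infty]=e^{-\Phi(p)a}$ from \eqref{eq:classical} will then transform this contribution into $\int_0^\infty e^{-\Phi(p)a}\nn[\Psi_a\cdot h(a,a+\xi_0)]da$. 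Thus all reduces to evaluating $\nn[\Psi_a\cdot h(a,a+\xi_0)]$.

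\textbf{Finite-variation case (i).} Here $\xi_0<0$ $\nn$-a.s. I would invoke the Markov property (iv) of $\nn$ at $t=0$, combined with the identification of the $\xi_0$-marginal under $\nn$ as $\nu/\drift$ (obtained en passant in the proof of Proposition~\ref{proposition:law-of-jump}), to disintegrate $\nn[\Psi_a\cdot h(a,a+\xi_0)]=\drift^{-1}\!\int h(a,a+z)\PP_z[\Psi_a]\nu(dz)$, where $\PP_z[\Psi_a]=\PP_z[e^{-\beta e^{\alpha a}I_\expp};\expp\le \tau_0^+]$ by Fubini. Corollary~\ref{corollary} (with $c=0$ and $\beta$ rescaled to $\beta e^{\alpha a}$) then identifies this as $\MM(e^{a+z},e^a)$. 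Matching with the joint law of $(\overline{X}_\expp,X_G-\overline{X}_\expp)$ supplied by Proposition~\ref{proposition:law-of-jump} -- marginal $\Phi(p)e^{-\Phi(p)a}da$ for $\overline{X}_\expp$ and, independently, an atom $\frac{p}{\Phi(p)\drift}$ at $0$ together with density $\frac{1}{\Phi(p)\drift}(1-e^{\Phi(p)z})\nu(dz)$ on $(-\infty,0)$ for the residual -- will read off the conditional expectation as $\MM(e^{a+z},e^a)/(1-e^{\Phi(p)z})$ for $z<0$ and $1$ at the atom, which is \eqref{eq:cond-laplace-jump-sup}.

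\textbf{Infinite-variation case (ii).} Now $\xi_0=0$ $\nn$-a.s.\ and $p/\drift=0$, so the above disintegration collapses; in its place I would invoke \eqref{eq:conditioning}. Via Fubini, $\nn[\Psi_a]=\int_0^\infty pe^{-pv}\nn[e^{-\beta e^{\alpha a}I_v};v<\zeta]dv$; applying \eqref{eq:conditioning} to the bounded continuous (in Skorokhod) integrand $\omega\mapsto e^{-\beta e^{\alpha a}I_v(\omega)}$ and swapping limit and integral (dominated convergence, using that $\PP_x[v<\tau_0^+]/\gg(x)$ is controlled uniformly for $x$ close to $0$ by a constant times $\nn(v<\zeta)$, which is $pe^{-pv}$-integrable with total mass $\Phi(p)$) will give
$$\nn[\Psi_a]=\kk\lim_{x\uparrow 0}\frac{\PP_x[e^{-\beta e^{\alpha a}I_\expp};\expp\le \tau_0^+]}{\gg(x)}=\kk\lim_{x\uparrow 0}\frac{\MM(e^{a+x},e^a)}{\gg(x)}$$
by Corollary~\ref{corollary}. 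L'H\^{o}pital ($\gg'(0)=-1$ in both definitional branches of $\gg$) together with elementary manipulation of the power series $\JJ,\II$ should evaluate the limit to $\Phi(p)\NN(e^a)$; the constant $\kk$ is pinned down to $1$ either by specializing to $\beta=0$ (where $\NN\equiv 1$, and the identity must recover the marginal law of $\overline{X}_\expp$) or by the identity $\nn(1-e^{-p\zeta}\mathbbm{1}_{\zeta<\infty})=\Phi(p)$ from Remark~\ref{remark:law-of-zeta}. Dividing by $\Phi(p)e^{-\Phi(p)a}$ would then yield \eqref{eq:cond-laplace-sup-diffusion}. I expect the hardest part to be the limit-integral exchange and carrying through the L'H\^{o}pital-plus-algebra step cleanly so that it matches $\NN$ on the nose.
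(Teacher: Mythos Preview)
Your overall strategy---excursion decomposition, compensation formula, reduction to evaluating an $\nn$-expectation, and appeal to Corollary~\ref{corollary}---is exactly the paper's. For part~\ref{prop:fv} your argument is essentially identical to the paper's (you apply the Markov property of $\nn$ at $t=0$ directly, whereas the paper applies it at $t>0$ and takes $t\downarrow 0$ by dominated convergence; both reach $\nn[h(\xi_0)\MM(e^{a+\xi_0},e^a)]$ and finish the same way). One small point: your definition of $\Psi_a$ as $\int_0^\zeta pe^{-pv}\cdots dv$ silently assumes $p>0$; for $p=0$ the relevant contribution is $e^{-\beta e^{\alpha a}\mathrm{I}_\infty}\mathbbm{1}_{\{\zeta=\infty\}}$, which the paper handles by writing the $\expp$-integral against $\Exp_p$ on $(0,\infty]$.

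For part~\ref{prop:iv:diffusion} your route \emph{diverges} from the paper's and is, in spirit, more direct. The paper first writes $\BB=\lim_{t\downarrow 0}\nn[\MM(e^{a+\xi_t},e^a);t<\zeta]$, applies \eqref{eq:conditioning} to the $\xi_t$-functional, then trades $\lim_{t\downarrow 0}$ for $\lim_{q\to\infty}$ via a Laplace transform, invokes the resolvent identity \eqref{eq:resolvent}, and performs a lengthy term-by-term computation. You instead apply \eqref{eq:conditioning} to the integral functional $e^{-\beta e^{\alpha a}\mathrm{I}_v}$, swap $\lim_{x\uparrow 0}$ with $\int pe^{-pv}dv$, and then compute $\lim_{x\uparrow 0}\MM(e^{a+x},e^a)/\gg(x)$ by differentiation at $x=0$. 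The derivative computation is clean and does land on $\Phi(p)\NN(e^a)$; this bypasses the resolvent identity and the several pages of series manipulation entirely.

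The one soft spot is your justification of the limit--integral swap. The claimed uniform bound $\PP_x(v<\tau_0^+)/\gg(x)\le C\,\nn(v<\zeta)$ is not obvious (both numerator and denominator tend to $0$ as $x\uparrow 0$, and \eqref{eq:conditioning} gives only convergence of the ratio, not monotonicity or domination). A cleaner fix: once you know $\kk=1$ (which you correctly note follows from Remark~\ref{remark:law-of-zeta} independently of this computation), set $h_x(v):=pe^{-pv}\PP_x(v<\tau_0^+)/\gg(x)$ and $h(v):=pe^{-pv}\nn(v<\zeta)$; then $h_x\to h$ pointwise and $\int h_x=(1-e^{\Phi(p)x})/\gg(x)\to\Phi(p)=\int h$, so the generalized dominated convergence theorem (Fatou applied to $h_x\pm g_x$) gives $\int g_x\to\int g$ for any $0\le g_x\le h_x$ with $g_x\to g$. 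This closes the gap without the unproven domination.
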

\begin{remark}
Recall $\overline{X}_\expp=X_G$, i.e. $\jump=1$, a.s. when $X$ has paths of infinite variation.
\end{remark}
\begin{remark}
In the course of the proof we establish, en passant, that the $\kk$ from \eqref{eq:conditioning}  is equal to $1$.
\end{remark}
\begin{proof}
Again we may work without loss of generality under $\PP$. Let $\{f,g\} \subset \mathcal{B}_{\mathbb{R}}/\mathcal{B}_{[0,\infty)}$ be bounded. We are interested in $$\PP\left[e^{-\beta\int_G^\expp e^{\alpha X_u}du}h(X_G-\overline{X}_\expp)f(\overline{X}_\expp)\right]$$\footnotesize
$$=\PP\left[\sum_{a\in \dd}\exp\left\{-\beta e^{\alpha \overline{X}_{\tau_{a-}^+}}\int_0^{\expp-\tau_{a-}^+} e^{\alpha \epsilon_a(u)}du\right\}h(\epsilon_a(0))f(\overline{X}_{\tau_{a-}^+})\mathbbm{1}_{\{\tau_{a-}^+< \expp\leq \tau_a^+\}}\right]+\PP\left[h(0)f(\overline{X}_\expp);\overline{X}_\expp=X_\expp\right]$$\normalsize
(the second term appearing only if $p>0$)\footnotesize
$$=\PP\left[\sum_{a\in \dd}e^{-p\tau_{a-}^+}\exp\left\{-\beta e^{\alpha \overline{X}_{\tau_{a-}^+}}\int_0^{\expp} e^{\alpha \epsilon_a(u)}du\right\}h(\epsilon_a(0))f(\overline{X}_{\tau_{a-}^+})\mathbbm{1}_{\{\expp\leq \zeta(\epsilon_a)\}}\right]+h(0)\PP[f(\overline{X}_\expp)]\PP(\overline{X}_\expp=X_\expp).$$\normalsize
(by the memoryless property of the exponential distribution and because $X$ is independent of $\expp$)
$$=\int_0^\infty e^{-\Phi(p)a}f(a)\int \nn\left[e^{-\beta e^{\alpha a}\mathrm{I}_u}h(\xi_0);u\leq \zeta\right]\Exp_p(du)da+h(0)\int_0^\infty\Phi(p) e^{-\Phi(p)a}f(a)da \frac{p}{\Phi(p)\drift},$$
where $\Exp_p$ is the exponential law of rate $p$ on $\mathcal{B}_{(0,\infty]}$. 
Thus it remains to determine, for $a\in [0,\infty)$, 
$$\BB:=\int \nn\left[e^{-\beta e^{\alpha a}\mathrm{I}_u}h(\xi_0);u\leq \zeta\right]\Exp_p(du)=\lim_{t\downarrow 0}\int \nn\left[e^{-\beta e^{\alpha a}\mathrm{I}_u}h(\xi_0);t<u\leq \zeta\right]\Exp_p(du)$$
(by monotone convergence)
$$=\lim_{t\downarrow 0}\int_{(t,\infty]} \nn\left[e^{-\beta e^{\alpha a}\mathrm{I}_t}h(\xi_0)\left(e^{-\beta e^{\alpha a}\mathrm{I}_{u-t}}\mathbbm{1}_{\{\zeta\geq u-t\}}\right)\circ\theta_t;t<\zeta\right]\Exp_p(du)$$
$$=\lim_{t\downarrow 0}\int_{(t,\infty]} \nn\left[h(\xi_0)\left(e^{-\beta e^{\alpha a}\mathrm{I}_{u-t}}\mathbbm{1}_{\{\zeta\geq u-t\}}\right)\circ\theta_t;t<\zeta\right]\Exp_p(du)$$
(by dominated convergence, because $\nn(\zeta\geq u)<\infty$ for each $u\in (0,\infty]$ and moreover $\nn[1-e^{-p\zeta}\mathbbm{1}_{\{\zeta<\infty\}}]<\infty$)
$$=\lim_{t\downarrow 0}\int_{(t,\infty]} \nn\left[h(\xi_0)\left(e^{-\beta e^{\alpha a}\mathrm{I}_{u-t}}\mathbbm{1}_{\{\zeta\geq u-t\}}\right)\circ\theta_t;t<\zeta\right]e^{pt}\Exp_p(du)$$
$$=\lim_{t\downarrow 0}\int \nn\left[h(\xi_0)\left(e^{-\beta e^{\alpha a}\mathrm{I}_{v}}\mathbbm{1}_{\{\zeta\geq v\}}\right)\circ\theta_t;t<\zeta\right]\Exp_p(dv)$$
$$=\lim_{t\downarrow 0}\int \nn \left[h(\xi_0)\PP_{\xi_t}\left[e^{-\beta e^{\alpha a}\mathrm{I}_v};\tau_0^+\geq v\right];t<\zeta\right]\Exp_p(dv)$$
$$=\lim_{t\downarrow 0} \nn \left[h(\xi_0)\PP_{\xi_t}\left[e^{-\beta e^{\alpha a}\mathrm{I}_\expp};\tau_0^+\geq \expp\right];t<\zeta\right]=\lim_{t\downarrow 0}\nn \left[h(\xi_0)\MM( e^{a+\xi_t}, e^{ a});t<\zeta\right],$$ where we used \eqref{eq:exit-conditioned} in the last equality. 

\ref{prop:fv}. Suppose now first that $X$ is of finite variation. We know already from the proof of Proposition~\ref{proposition:law-of-jump} that $\nn[1-e^{\Phi(p)\underline{\xi}_{\zeta}}]<\infty$. Since, for $t\in (0,\infty)$, $\MM(e^{a+\xi_t},e^{ a})\leq 1-e^{\Phi(p)\xi_t}\leq 1-e^{\Phi(p)\underline{\xi}_{\zeta}}$ on $\{t<\zeta\}$, it follows therefore by dominated convergence that 
$$\BB=\nn\left[h(\xi_0)\MM( e^{a+\xi_0}, e^{ a})\right].$$
Hence, using the fact that ${\overline{X}_\expp}_\star\PP=\Exp_{\Phi(p)}$, Proposition~\ref{proposition:law-of-jump}, Remark~\ref{remark:jump}, and the independence of $\overline{X}_\expp$ from $X_G-\overline{X}_\expp$, it follows that  $$\PP\left[e^{-\beta\int_G^\infty e^{\alpha X_u}du}h(X_G-\overline{X}_\expp)f(\overline{X}_\infty)\right]$$
$$=\PP\left[f(\overline{X}_\infty)h(X_G-\overline{X}_\infty)\left(\frac{\MM( e^{ X_G}, e^{ \overline{X}_\expp})}{1-e^{\Phi(p)(X_G-\overline{X}_\expp)}}\mathbbm{1}_{(-\infty,0)}(X_G-\overline{X}_\expp)+\mathbbm{1}_{\{0\}}(X_G-\overline{X}_\expp)\right)\right]$$ and \eqref{eq:cond-laplace-jump-sup} is proved.

\ref{prop:iv:diffusion}. Now let $X$ be of infinite variation; take $h=1$. Because the coordinate projection $(\DD\ni \xi\mapsto \xi_t)$ is continuous in the Skorokhod topology at all paths for which $t\in (0,\infty)$ is a continuity point, and in particular (by the Markov property of $\nn$ and since $X$ has no fixed points of discontinuity a.s.) $\nn$-a.e., it follows from \eqref{eq:conditioning} that 
$$\BB=\kk\lim_{t\downarrow 0}\lim_{x\uparrow 0} \frac{\PP_x\left[\MM( e^{a+X_t}, e^{ a});t<\tau_0^+\right]}{\gg(x)}.$$
By the preceding the expression inside the limit $\lim_{t\downarrow 0}$, call it $r(t)$, is bounded by $\nn(1-e^{-p\zeta}\mathbbm{1}_{\{\zeta<\infty\}})e^{pt}$ and the limit $\lim_{t\downarrow 0}r(t)$ exists a priori. Therefore $\lim_{q\to\infty}\int_0^\infty e^{-q t}q r(t)dt=\lim_{q\to\infty}\frac{q}{q-p}\int_0^\infty e^{-(q-p) t}(q-p) e^{-pt}r(t)dt=r(0+)$, and since $\lim_{x\downarrow 0}\frac{\gg(x)}{-x}=1$, we obtain
$$\BB=\kk \lim_{q \to\infty}\int_0^\infty q e^{-q t}\lim_{x\uparrow 0} \frac{\PP_x\left[\MM( e^{a+X_t}, e^{ a});t<\tau_0^+\right]}{-x}dt.$$ 
Besides, for all $t\in (0,\infty)$, $\MM( e^{a+X_t}, e^{ a})\leq 1-e^{\Phi(p)X_t}$ on $\{t<\tau_0^+\}$, and since $\PP_x\left[1-e^{\Phi(p)X_t};t<\tau_0^+\right]=\PP_x[(1-e^{-p(\tau_0^+-t)}\mathbbm{1}_{\{\tau_0^+<\infty\}});t<\tau_0^+]\leq \PP_x(\expp\leq \tau_0^+)=1-e^{\Phi(p)x}$, dominated convergence yields  $$\BB=\kk \lim_{q \to\infty}\lim_{x\uparrow 0} \frac{\int_0^\infty q e^{-q t}\PP_x\left[\MM( e^{a+X_t}, e^{ a});t<\tau_0^+\right]dt}{-x}$$
\begin{equation}\label{eq:double-limit}
=\kk \lim_{q \to\infty}q \lim_{x\downarrow  0}\int_{0}^\infty\MM( e^{a-y}, e^{ a})\frac{e^{-\Phi(q)x}\Wm(y)-\Wm(y-x)}{x}dy,
\end{equation}
by \eqref{eq:resolvent}.

Next, as the integral of a resolvent density,  for all $q\in(0,\infty)$ and all $x\in (0,\infty)$, one has that $\int_{0}^\infty e^{-\Phi(q)x}\Wm(y)-\Wm(y-x)dy$ is finite. At the same time we know from \eqref{eq:laplace} that 
\begin{equation}\label{eq:ad-analytic-cont}
(\psi(\lambda)-q)\int_0^\infty e^{-\lambda y}\left(e^{-\Phi(q)x}\Wm(y)-\Wm(y-x)\right)dy=e^{-\Phi(q)x}-e^{-\lambda x}
\end{equation}
at least for $\lambda\in (\Phi(q),\infty)$. But by the theorems of Cauchy, Morera and Fubini, the left-hand side, and clearly the right-hand side are analytic/can be extended to analytic functions in $\lambda\in \{z\in \mathbb{C}:\Re z>0\}$. Hence, by the principle of permanence for analytic function, the equality \eqref{eq:ad-analytic-cont} prevails for $\lambda\in (0,\infty)$. %\footnote{The equality is extended ``by analytic continuation''.} 
Taking limits, by monotone convergence/continuity, we conclude that 

$$
\int_0^\infty e^{-\lambda y}\left(e^{-\Phi(q)x}\Wm(y)-\Wm(y-x)\right)dy=\frac{e^{-\Phi(q)x}-e^{-\lambda x}}{\psi(\lambda)-q}
$$ 
for all $\lambda\in [0,\infty)$, provided the right-hand side is interpreted in the limiting sense at $\lambda=\Phi(q)$.

Consequently, integrating term-by-term (via linearity and monotone or dominated convergence) in \eqref{eq:double-limit}, we obtain, assuming $\Phi(p)\notin\alpha\mathbb{N}$,

$$\BB=\kk \lim_{q \to\infty}^{*}q \lim_{x\downarrow  0}\frac{\sum_{k=0}^\infty \aa_k (\beta e^{\alpha a})^k\frac{e^{-\Phi(q)x}-e^{-\alpha k x}}{\psi(\alpha k)-q}-\frac{\JJ(\beta e^{\alpha a})}{\II(\beta e^{\alpha a})}\sum_{k=0}^\infty \bb_k(\beta e^{\alpha a}) ^k\frac{e^{-\Phi(q)x}-e^{-(\Phi(p)+k\alpha) x}}{\psi(\Phi(p)+k\alpha)-q}}{x}$$ where $\lim^\star$ indicates that we take the limit along a sequence $(q_k)_{k\in \mathbb{N}_0}$ that uniformly (we use this /for convenience/ later on when arguing dominated convergence) avoids the grid $\psi(\alpha\mathbb{N}_0\cup (\Phi(p)+\alpha\mathbb{N}_0))$. 
%\footnotesize
%\begin{equation}\label{eq:grid-avoid}
%\text{take the limit along a sequence $(q_k)_{k\in \mathbb{N}_0}$ for which $\lim_{k\to\infty}\inf_{l\in \mathbb{N}_0}\vert \psi(\alpha l)-q_k\vert\land \vert \psi(\Phi(p)+\alpha l)-q_k\vert=\infty$};
%\end{equation}
%\normalsize
%this we can do because $\lim_\infty \psi'=\infty$. 

Then by dominated convergence (recall the elementary estimate $1-e^{-u}\leq u$, $u\in [0,\infty)$)
$$\BB=\kk \lim_{q \to\infty}^{*}q \left[\sum_{k=0}^\infty \aa_k (\beta e^{\alpha a})^k\frac{\alpha k-\Phi(q)}{\psi(\alpha k)-q}-\frac{\JJ(\beta e^{\alpha a})}{\II(\beta e^{\alpha a})}\sum_{k=0}^\infty \bb_k(\beta e^{\alpha a}) ^k\frac{\Phi(p)+k\alpha-\Phi(q)}{\psi(\Phi(p)+k\alpha)-q}\right]$$
\footnotesize
$$=\kk \lim_{q \to\infty}^*q\left[\sum_{k=0}^\infty \aa_k(\beta  e^{\alpha a})^k\left(\frac{\alpha k-\Phi(q)}{\psi(\alpha k)-q}-\frac{\Phi(q)}{q}\right)-\frac{\JJ(\beta  e^{\alpha a})}{\II(\beta  e^{\alpha a})}\sum_{k=0}^\infty \bb_k(\beta  e^{\alpha a})^k\left(\frac{\Phi(p)+\alpha k-\Phi(q)}{\psi(\Phi(p)+k\alpha)-q}-\frac{\Phi(q)}{q}\right)\right].$$\normalsize
%We now specialize the sequence $(q_k)_{k\in \mathbb{N}_0}$ along which we take the limit to $q_k=\psi(\alpha (k+\gamma))$, $k\in \mathbb{N}_0$, for a $\gamma\in (0,1)$ that makes sure the sequence avoids the grid $G$.  

$$=\kk \lim_{q \to\infty}^{*}\left[\sum_{k=0}^\infty \aa_k(\beta  e^{\alpha a})^k\frac{q\alpha k}{\psi(\alpha k)-q}-\frac{\JJ(\beta  e^{\alpha a})}{\II(\beta  e^{\alpha a})}\sum_{k=0}^\infty \bb_k(\beta  e^{\alpha a})^k\frac{q(\Phi(p)+\alpha k)}{\psi(\Phi(p)+k\alpha)-q}\right]$$
$$+\kk \lim_{q \to\infty}^{*}\sum_{k=0}^\infty \aa_k(\beta  e^{\alpha a})^k\left(\frac{(\alpha k-\Phi(q))\psi(\alpha k)}{\psi(\alpha k)-q}-\frac{\alpha k\psi(\alpha k)}{\psi(\alpha k)-q}\right)$$
$$-\kk\frac{\JJ(\beta  e^{\alpha a})}{\II(\beta  e^{\alpha a})} \lim_{q \to\infty}^{*}\sum_{k=0}^\infty \bb_k(\beta  e^{\alpha a})^k\left(\frac{(\alpha k+\Phi(p)-\Phi(q))\psi(\alpha k+\Phi(p))}{\psi(\alpha k+\Phi(p))-q}-\frac{(\alpha k+\Phi(p))\psi(\alpha k+\Phi(p))}{\psi(\alpha k+\Phi(p))-q}\right)$$
$$=\kk \lim_{q \to\infty}^{*}\left[\sum_{k=0}^\infty \aa_k(\beta  e^{\alpha a})^k\frac{q\alpha k}{\psi(\alpha k)-q}-\frac{\JJ(\beta  e^{\alpha a})}{\II(\beta  e^{\alpha a})}\sum_{k=0}^\infty \bb_k(\beta  e^{\alpha a})^k\frac{q(\Phi(p)+\alpha k)}{\psi(\Phi(p)+k\alpha)-q}\right]$$
(by dominated convergence, noting that $\frac{z-w}{\psi(z)-\psi(w)}$ is bounded in $z\ne w$, $\{z,w\}\subset [\Phi(p),\infty)$ by the strict convexity of $\psi$, and that $\lim_{q\to\infty}\frac{\Phi(q)}{q}=0$ because $\lim_\infty\psi'= \infty$)
$$=\kk \lim_{q \to\infty}^{*}\sum_{k=0}^\infty \aa_k(\beta  e^{\alpha a})^k\left(\frac{q\alpha k}{\psi(\alpha k)-q}+\alpha k-\alpha k\right)$$
$$-\kk \frac{\JJ(\beta  e^{\alpha a})}{\II(\beta  e^{\alpha a})}\lim_{q \to\infty}^{*}\sum_{k=0}^\infty \bb_k(\beta  e^{\alpha a})^k\left(\frac{q(\Phi(p)+\alpha k)}{\psi(\Phi(p)+k\alpha)-q}+\Phi(p)+\alpha k-(\Phi(p)+\alpha k)\right)$$
$$=\kk\left[\frac{\JJ(\beta  e^{\alpha a})}{\II(\beta  e^{\alpha a})}\sum_{k=0}^\infty \bb_k(\beta  e^{\alpha a})^k(\Phi(p)+\alpha k)-\sum_{k=1}^\infty \aa_k(\beta  e^{\alpha a})^k\alpha k\right]$$
(again by dominated convergence).
Plugging in $f=1$, $\beta=0$, we identify $\kk=1$. The case $\Phi(p)\in \alpha\mathbb{N}$ follows by taking limits.
\end{proof}

\subsection{Conditional temporal splitting at the maximum}
Combining our results we arrive at

\begin{theorem}\label{theorem}
Let $\beta\in [0,\infty)$.
\begin{enumerate}
\item Let $X$ be of finite variation. Then the random variables $L$ and $T_0-L$ are independent given $\overline{Y}_\infty$ and $\jump$ (and also just given $\overline{Y}_\infty$), which in turn are independent. The law of $\jump$ is given by \eqref{eq:law-of-jump}, and one has the conditional factorization 

$$\QQ_y[e^{-\beta T_0}\vert \overline{Y}_\infty,\jump]=\frac{\II(\beta y^\alpha)}{\II(\beta {\overline{Y}_\infty}^\alpha)}\times \left[\frac{\MM(\overline{Y}_\infty \jump,\overline{Y}_\infty)}{1-\jump^{\Phi(p)}}\mathbbm{1}_{(0,1)}(\jump)+\mathbbm{1}_{\{1\}}\left(\jump\right)\right],$$ a.s.-$\QQ_y$ for all $y\in \mathbb{R}$, where the two factors either side of $\times$ correspond to the conditional expectations $\QQ_y[e^{-\beta L}\vert \overline{Y}_\infty]\overset{\text{a.s.-$\QQ_y$}}{=}\QQ_y[e^{-\beta L}\vert \overline{Y}_\infty,\jump]$ and $\QQ_y[e^{-\beta (T_0-L)}\vert \overline{Y}_\infty,\jump]$, respectively.
\item Let $X$ be of infinite variation. Then $\jump=1$ a.s., the random variables $L$ and $T_0-L$ are independent given $\overline{Y}_\infty$, and one has the conditional factorization 

$$\QQ_y[e^{-\beta T_0}\vert \overline{Y}_\infty,\jump]=\frac{\II(\beta y^\alpha)}{\II(\beta {\overline{Y}_\infty}^\alpha)}\times\NN(\overline{Y}_\infty),$$ a.s.-$\QQ_y$ for all $y\in \mathbb{R}$, where the two factors either side of $\times$ correspond to the conditional expectations $\QQ_y[e^{-\beta L}\vert \overline{Y}_\infty]$ and $\QQ_y[e^{-\beta (T_0-L)}\vert \overline{Y}_\infty]$, respectively.
\end{enumerate}
In either case the law of $\overline{Y}_\infty$  is exponential of rate $\Phi(p)$.
\end{theorem}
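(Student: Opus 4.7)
The plan is to assemble Theorem~\ref{theorem} from the preparatory results; essentially no new computation is needed, only bookkeeping.

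First I would dispense with the marginal laws: \eqref{eq:classical} gives that $\overline{X}_\expp$ is exponential of rate $\Phi(p)$ under $\PP$, hence $\overline{Y}_\infty=e^{\overline{X}_\expp}$ has the announced exponential law under $\QQ$; in the finite variation case the distribution of $\jump$ is precisely Proposition~\ref{proposition:law-of-jump}, while in the infinite variation case $\jump=1$ a.s.\ by \eqref{eq:at-sup} together with the interpretation of $\drift=\infty$ recorded there.

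Next I would address the independence assertions. Both the mutual independence of $\overline{Y}_\infty$ and $\jump$ and the conditional independence of $L$ and $T_0-L$ given $(\overline{Y}_\infty,\jump)$ (which, in the finite variation case, further reduces to conditioning on $\overline{Y}_\infty$ alone, since $\jump\perp(L,\overline{Y}_\infty)$) are precisely statements (I) and (II) of the Introduction, and rest on the Wiener-Hopf factorization recalled in Subsection~\ref{wiener-hopf}: the pre-supremum process $(X_t;t\in[0,G))$ is independent of the post-supremum increments $(X_{G+t}-X_{G-};t\in[0,\expp-G))$. Concretely, $L=I_G$ and $\overline{Y}_\infty=e^{\overline{X}_G}$ are functionals of the pre-$G$ data; $\jump=e^{X_G-\overline{X}_\expp}$ reads off the initial jump $X_G-X_{G-}$ of the post-$G$ increments; and
\[
T_0-L=e^{\alpha X_G}\int_0^{\expp-G}e^{\alpha(X_{G+u}-X_G)}du
\]
splits into a prefactor $e^{\alpha X_G}=\overline{Y}_\infty^{\alpha}\jump^{\alpha}$, measurable with respect to $(\overline{Y}_\infty,\jump)$, times a functional of the post-$G$ increments. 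Standard disintegration then yields both independence claims.

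Finally I would take conditional Laplace transforms: by the just-established conditional independence,
\[
\QQ_y[e^{-\beta T_0}\mid\overline{Y}_\infty,\jump]=\QQ_y[e^{-\beta L}\mid\overline{Y}_\infty,\jump]\cdot\QQ_y[e^{-\beta(T_0-L)}\mid\overline{Y}_\infty,\jump],
\]
and substitute, for the first factor, Proposition~\ref{proposition:laplace-transforms} (in the finite variation case dropping the $\jump$-conditioning on that factor by virtue of $\jump\perp(L,\overline{Y}_\infty)$), and for the second factor, the relevant part \ref{prop:fv} or \ref{prop:iv:diffusion} of Proposition~\ref{proposition:laplace-of-residual-time}. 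Reading off the resulting expression in the two regimes gives the two stated factorizations. The only point requiring care is the clean measurability decomposition of $T_0-L$ into a $(\overline{Y}_\infty,\jump)$-measurable prefactor times a functional of the post-$G$ increments that is genuinely independent of the pre-$G$ data — once that is in hand everything else is routine concatenation.
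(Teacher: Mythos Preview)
Your proposal is correct and follows exactly the same route as the paper's own proof, which simply invokes Propositions~\ref{proposition:law-of-jump}, \ref{proposition:laplace-transforms}, \ref{proposition:laplace-of-residual-time} together with the (conditional) independence observations (I)--(II) of the Introduction; you have spelled out those observations in slightly more detail, but there is no substantive difference. One minor bibliographic slip: in the infinite variation case, $\jump=1$ a.s.\ is not a consequence of \eqref{eq:at-sup} (which concerns $X_\expp$, not $X_G$) but rather of the Wiener-Hopf discussion in Subsection~\ref{wiener-hopf}(i), where $X_{G-}=X_G$ a.s.\ is recorded for downward-regular $X$.
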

\begin{proof}
This follows from Propositions~\ref{proposition:law-of-jump},~\ref{proposition:laplace-transforms} and~\ref{proposition:laplace-of-residual-time}, and from the comments concerning the (conditional) independences in $(L,\overline{Y}_\infty,\jump,T_0-L)$  made in the Introduction (as a consequence of the independence statement of the Wiener-Hopf factorization for $X$). 
\end{proof}

\section{Concluding remarks/applications}\label{section:concluding}

We conclude with some indications of applications and possible further avenues of research (besides Question~\ref{section:concluding} that we already pointed out in the Introduction).

\subsection{Expected discounted payoff of a ``regret'' lookback option}
One immediate application of the above that springs to mind is to the computation of the expected discounted  payoff (under the ``physical'' measure) of a lookback option on the stock of a company that one sees eventually going bankrupt and whose price is modeled by the process $Y$. The idea being that one holds equity in the company until termination, say for dividends, but at the same time wants an option to provide some hedge against not selling the stock sooner (or indeed at its maximum). It is a ``buy-and-hold'' strategy in the face of the recognition that eventually the company will terminate.  

Specifically, we may imagine that the stock price is given by the process $Y$ under the ``physical'' measure $\QQ_y$ for an initial price $y\in (0,\infty)$. Assumption~\ref{assumption} then means that eventually the  price will hit zero a.s., either abruptly, say as a result of some one-off adverse event, when $p>0$, or continuously, say as a result of gradually deteriorating business conditions, when $p=0$. At the same time we have an option written on the stock that will pay some (nondecreasing) function $f:(0,\infty)\to [0,\infty)$ of the overall maximum $\overline{Y}_\infty$ at termination $T_0$, providing thus some measure of compensation for the ``regret'' of not having sold the stock optimally. 

Assuming a constant risk-free force of interest $r\in [0,\infty)$, then the expected discounted payoff of such an option is simply $$\QQ_y[e^{-rT_0}f(\overline{Y}_\infty)]=\QQ_y[f(\overline{Y}_\infty)\QQ_y[e^{-r L}\vert \overline{Y}_\infty]\QQ_y[e^{-r(T_0-L)}\vert \overline{Y}_\infty,J]],$$ which may easily be expressed using the results of Theorem~\ref{theorem} in terms of (at most) a two-dimensional integral. Such an expectation provides some information on the ``value'' of the option for the investor, though of course it does not correspond to a risk-neutral valuation theoreof.

\subsection{Properties of the joint law of $(L,\overline{Y}_\infty,J,T_0-L)$}
It is immediate from the definition of $\II$ and from Proposition~\ref{proposition:laplace-transforms} (but not obvious a priori) that, for a given $\beta\in[0,\infty)$, $y\in (0,\infty)$, the conditional Laplace transform $\QQ_y[e^{-\beta L}\vert \overline{Y}_\infty]\overset{\text{a.s.}}{=}\frac{\II(\beta Y^\alpha)}{\II(\beta{\overline{Y}_\infty}^\alpha)}$ is decreasing as a function of $\overline{Y}_\infty$. On the other hand it is clear from Definition~\ref{definition:NN} that $\NN(y)$ depends on $\beta$ and $y$ only through $\beta y^\alpha$. Hence it follows from Proposition~\ref{proposition:laplace-of-residual-time}\ref{prop:iv:diffusion} that $\QQ_y[e^{-\beta (T_0-L)}\vert \overline{Y}_\infty]\overset{\text{a.s.}}{=}\NN(\beta {\overline{Y}_\infty}^\alpha)$ is decreasing as a function of $\overline{Y}_\infty$ when $X$ has paths of infinite variation.  In the opposite case, it follows similarly from Definition~\ref{definition:MM}, Corollary~\ref{corollary} and from Proposition~\ref{proposition:laplace-of-residual-time}\ref{prop:fv}, that $\QQ_y[e^{-\beta (T_0-L)}\vert \overline{Y}_\infty,J]\overset{\text{a.s.}}{=}\frac{\MM(\overline{Y}_\infty \jump,\overline{Y}_\infty)}{1-\jump^{\Phi(p)}}\mathbbm{1}_{(0,1)}(\jump)+\mathbbm{1}_{\{1\}}\left(\jump\right)$ is also decreasing in $\overline{Y}_\infty$; however its dependence on $\jump$ is non-trivial, see Figure~\ref{figure}. 

\begin{figure}[h!]
\begin{center}
 \includegraphics{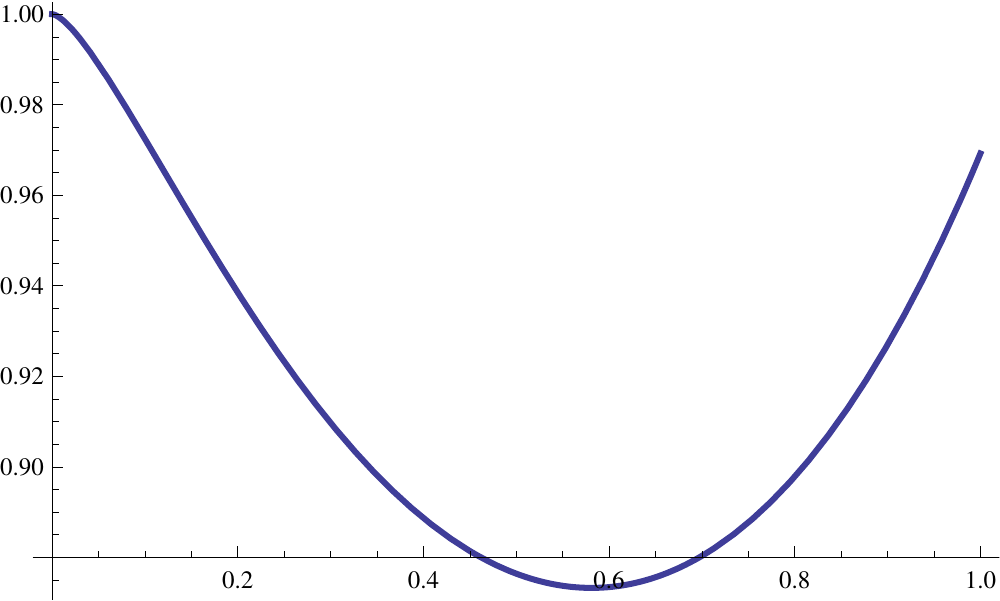}
\caption{The function $(0,1)\ni j\mapsto \frac{\MM(j,1)}{1-j^{\Phi(p)}}$ for $\alpha=2$, $p=1$ and $\psi(\lambda)=\lambda-\frac{\lambda}{\lambda+1}$, $\lambda\in [0,\infty)$, corresponding to $X$ being the difference of a unit drift and of a homogeneous Poisson process of unit intensity.}\label{figure}
\end{center}
\end{figure}
More generally one would be interested in
\begin{question}
What properties of the joint law of $(L,\overline{Y}_\infty,J,T_0-L)$ can be deduced based on the results of Theorem~\ref{theorem} (or otherwise)?
\end{question}
We have given a flavor of this in the above, but do not pursue this problem any further here.

%\section{Illustrations}

%\subsection{An example with continuous sample paths}
%%Let $\psi(\lambda)=\frac{\diffusion}{2}\lambda^2+\drift \lambda$, $\lambda\in [0,\infty)$, corresponding to $X$ being a Brownian motion with drift; let $p=0$. Necessarily $\drift<0$ ($X$ must drift to $-\infty$ because $p=0$) and $\diffusion>0$ (because processes with monotone paths are excluded from the class of snLp). For simplicity we assume that $\frac{\diffusion \alpha ^2}{2}=-\drift \alpha=1$, so that $\psi(\lambda)=\frac{\lambda}{\alpha}(\frac{\lambda}{\alpha}-1)$, $\lambda\in[0,\infty)$.
%Then $\Phi(0)=\alpha$; $\bb_k=\frac{1}{k!(k+1)!}$ for $k\in \mathbb{N}_0$; hence $\II(y)=\frac{e^y-1}{y}$ for $y\in (0,\infty)$; $\cc_k=$ for $k\in \mathbb{N}$; and hence . 

% Here for $x\in \mathbb{R}$ and $n\in \mathbb{N}_0$, $x^{\underline{n}}:=x(x-1)\cdots (x-n+1)$ is the falling factorial. 

\bibliographystyle{plain}
\bibliography{pssMp}
%\appendix
%\section*{Appendix}
%\renewcommand{\theequation}{A.\arabic{equation}}
%

\end{document}